\numberwithin{equation}{section}
\theoremstyle{plain}
\newtheorem{theorem}{Theorem}[section]
\newtheorem{lemma}[theorem]{Lemma}
\newtheorem{proposition}[theorem]{Proposition}
 \theoremstyle{definition}
\newtheorem{remark}[theorem]{Remark}
\newtheorem{?}[theorem]{Problem}
\newcommand{\R}{\mathbb{R}}
\newcommand{\II}{{\rm{II}}}
\newcommand{\D}{\mathbb{D}}
\newcommand{\e}{{\rm {exp}}}
\newcommand{\A}{\mathcal A}
\begin{document}

\title[Narrow escape problem]{Narrow escape problem in the presence of the force field}


\author[Medet Nursultanov]{Medet Nursultanov}
\address {School of Mathematics and Statistics, University of Sydney}
\email{medet.nursultanov@gmail.com}

\author[William Trad] {William Trad}
\address {School of Mathematics and Statistics, University of Sydney}
\email{w.trad@maths.usyd.edu.au}

\author[Leo Tzou]{Leo Tzou}
\address {School of Mathematics and Statistics, University of Sydney}
\email{leo.tzou@gmail.com}

\thanks{M. Nursultanov, W. Trad, and L. Tzou are partially supported by ARC DP190103302 and ARC DP190103451 during this work. This work forms part of the second author's PhD thesis under the supervision of the third author.}

 \subjclass[2010]{Primary: 58J65  Secondary: 60J65, 35B40, 92C37 }

 \keywords{Narrow escape problem, mean sojourn time, drift-diffusion, force field.}

\maketitle
\begin{abstract}
	This paper considers the narrow escape problem of a Brownian particle within a three-dimensional Riemannian manifold under the influence of the force field. We compute an asymptotic expansion of mean sojourn time for Brownian particles. As a auxiliary result, we obtain the singular structure for the restricted Neumann Green's function which may be of independent interest. 
\end{abstract}

\section{Introduction}

Let us consider a Brownian particle confined to a bounded domain by a reflecting boundary, except for a small absorbing part which is thought of as a target. The narrow escape problem deals with computing the mean sojourn time of the aforementioned Brownian particle. Mathematically, this can be formulated as follows. Let $(M,g,\partial M)$ be a compact, connected, orientable Riemannian manifold with non-empty smooth boundary $\partial M$. Additionally, let $(X_t, \mathbb{P}_x)$ be the Brownian motion on $M$ generated by differential operator 
$$\Delta_{g} \cdot +g(F,\nabla_g\cdot)$$
where $\Delta_{g}=-d^*d$ is the (negative) Laplace-Beltrami operator,  $\nabla_g$ is the gradient, $F$ is a force field given by the potential $\phi$, that is $F = \nabla_g\phi$. We use $\Gamma_\varepsilon\subset \partial M$ to denote the absorbing window through which the $(X_t,\mathbb{P}_t)$ can escape, we use $\varepsilon$ to denote the size of the window and we denote by $\tau_{\Gamma_{\varepsilon}}$ the first time the Brownian motion $X_t$ hits $\Gamma_{\varepsilon}$, that is
\begin{equation*}
\tau_{\Gamma_{\varepsilon}} := \inf \{ t\geq 0: X_t \in \Gamma_{\varepsilon}\}.
\end{equation*}
As said earlier, we wish to derive asymptotics as $\varepsilon \to 0$ for the mean sojourn time which is denoted by $u_\varepsilon$ and is given by $\mathbb{E}[\tau_{\Gamma_{\varepsilon}} | X_0 = x]$. Another quantity of interest is the {\em spatial average} of the mean sojourn time:
$$|M|^{-1} \int_M \mathbb{E}[\tau_{\Gamma_{\varepsilon, a}} | X_0 = x] d_g(x).$$
Here $|M|$ denotes the Riemannian volume of $M$ with respect to the metric $g$.

Initially, this problem was mentioned in the context of acoustics in \cite{Rayleigh} (1945). Much later (2004), the interest to this problem was renewed due to relation to molecular biology and biophysics; see \cite{holcman2004escape}. Many problems in cellular biology may be formulated as mean sojourn time problems; a collection of analysis methods, results, applications, and references may be found in \cite{holcman2015stochastic} and \cite{BressloffNewby}. For example, cells have been modelled as simply connected two-dimensional domains with small absorbing windows on the boundary representing ion channels or target binding sites; the quantity sought is then the mean time for a diffusing ion or receptor to exit through an ion channel or reach a binding site \cite{schuss2007narrow, holcman2004escape, pillay2010asymptotic}. All of this lead to the narrow escape theory in applied mathematics and computational biology; see \cite{schuss2006ellipse,singer2006narrow,SingerSchussHolcman2006}.

There has been much progress for this problem in the setting of planar domains, and we refer the readers to \cite{holcman2004escape, pillay2010asymptotic, singer2006narrow, ammari} and references therein for a complete bibliography. An important contribution was made in the planar case by \cite{ammari} to introduce rigor into the computation of \cite{pillay2010asymptotic}. The use of layered potential in \cite{ammari} also cast this problem in the mainstream language of elliptic PDE and facilitates some of the approach we use in this article. 

Few results exists for three dimensional domains in $\R^n$ or Riemannian manifolds; see \cite{cheviakov2010asymptotic,schuss2006ellipse,singer2008narrow, GomezCheviakov, NTT} and references therein. The additional difficulties introduced by higher dimension are highlighted in the introduction of \cite{ammari} and the challenges in geometry are outlined in \cite{singer2008narrow}. In the case when $M$ is a domain in $\R^3$ with Euclidean metric and $\Gamma_{\varepsilon, a}$ is a single small disk absorbing window, \cite{schuss2006ellipse, singer2008narrow} gave an expansion for the average of the expected first arrival time, averaged over $M$, up to an unspecified $O(1)$ term.
These results were improved upon in \cite{NTT}, by using geometric microlocal analysis. Namely, the authors derived the bounded term and estimated the remaining term, moreover, they obtain these results for general Reimaniann manifolds. The case when $\Gamma_{\varepsilon,a}$ is a small elliptic window was also addressed in \cite{schuss2006ellipse, singer2008narrow,NTT}. We also mention \cite{Schuss2012}, where the author gave a short review of related works (up to 2012).

When $M$ is a three dimensional ball with multiple circular absorbing windows on the boundary, an expansion
capturing the explicit form of the $O(1)$ correction in terms of the Neumann Green's function and its {\em regular part} was done 
in \cite{cheviakov2010asymptotic}. The method of matched asymptotic used there required the explicit computation of the Neumann Green's function, which is only possible in special geometries with high degrees of symmetry/homogeneity. In these results one does not see the full effects of local geometry. This result was also rigorously proved in \cite{ChenFriedman} but with a better estimate for the error term.

Much less has been done for the case of non-zero force field. For instance, all works we metioned above, except \cite{ammari}, deal with the diffusion without a force field, that is $F=0$. We could find two works concerning this case: \cite{SingerSchuss2007} and \cite{ammari}. Both these works consider $M$ being a domain in $\mathbb{R}^2$ or $\mathbb{R}^3$. In \cite{SingerSchuss2007}, the authors generalize the method of \cite{schuss2006ellipse,singer2006narrow,SingerSchussHolcman2006} to obtain the leading-order term of the average of the expected first arrival time for two and three dimensional cases. For planar domains, the authors in \cite{ammari}, by using layer potential techniques, derive assymptotic expansion up to $O(\varepsilon)$ term.

In this paper, we derive all the main terms of the expected value of the first arrival time for Riemaniann manifolds of dimension three in the presence of a force field. The window or target, $\Gamma_{\varepsilon}$, is considered to be a small geodesic ellipse of eccentricity $\sqrt{1-a^2}$ and size $\varepsilon \to 0^+$ (to be made precise later). To investigate $\mathbb{E}[\tau_{\Gamma_{\varepsilon}} | X_0 = x]$, we needed to know a singularity structure of the Neumann Green's function, which is given by the equation
\begin{equation}\label{NGF System}
\begin{cases}
\Delta_{g,z} G(x,z) - \mathrm{div}_{g,z} (F(z)G(x,z)) = -\delta_x(z);\\
\partial_{\nu_z}G(x,z) - F(z)\cdot \nu_z G(x,z) \left.\right|_{\partial M} = -\frac{1}{|\partial M|};\\
\int_{\partial M}e^{-\phi(z)} G(x,z) d_h(z) = 0.
\end{cases}
\end{equation}
is required. For the case $F=0$, the authors in \cite{singer2008narrow} highlighted the difficulty in obtaining a comprehensive singularity expansion of $G(x,z)\mid_{x,z\in\partial M, x\neq z}$ in a neighbourhood of the diagonal $\{x=z\}$ when $M$ is a bounded domain in $\R^n$, but it turns out that even when $M$ is a general Riemannian manifold this question can be treated via pseudo-differential techniques, which is done in \cite{NTT}. Here, we generalize result of \cite{NTT} for the case $F\neq 0$. Knowledge of the singularity structure allows us to derive the mean first arrival time of a Brownian particle on a Riemannian manifold with a single absorbing window which is a small geodesic ellipse. Our method extends to multiple windows but we present the single window case to simplify notations.

The paper is organized as follows. In Section \ref{notations}, we introduce the notations. In Section \ref{results}, we formulate the problem, state and discuss the main results of this paper. Section \ref{Green} deals with computing the singular structure of the Neumann Green's function. Finally, in Section \ref{pr main rres} we carry out the asymptotic calculation using the tools we have developed. The appendix characterizes the expected first arrival time $\mathbb{E}[\tau_{\Gamma_{\varepsilon, a}} | X_0 = x]$ as the solution of an elliptic mixed boundary value problem. This is classical in the Euclidean case (see \cite{schussbook}) but we could not find a reference for the general case of a Riemannian manifold with boundary.

	\section{Preliminaries}\label{notations}
	\subsection{Notation}
	Throughout  this  paper, $(M,g,\partial M)$ be a compact connected orientable Riemannian manifold with non-empty smooth boundary. The corresponding volume and geodesic distance are denoted by $d_g(\cdot)$ and $d_g(\cdot,\cdot)$, respectively. By $|M|$ we denote the volume of $M$.
	
	Let $\iota_{\partial M}: \partial M \hookrightarrow M$ is the trivial embedding of the boundary $\partial M$ into $M$. This allows us to define the following metric $h := \iota_{\partial M}^*g$ be the metric on the boundary $\partial M$. Set $d_h(\cdot)$ and $d_h(\cdot,\cdot)$ to be the volume and the geodesic distance on the boundary given by metric $h$. By $|\partial M|$ we denote the volume of $\partial M$ with respect to $d_h$.
	
	For $x\in \partial M$, let $E_1(x), E_2(x) \in T_{x}\partial M$ be the unit eigenvectors of the shape operator at $x$ corresponding respectively to the principal curvatures $\lambda_1(x),\ \lambda_2(x)$. We will drop the dependence in $x$ from our notation when there is no ambiguity. We choose $E_1$ and $E_2$ such that $E_1^\flat\wedge E_2^\flat\wedge\nu^\flat$ is a positive multiple of the volume form $ d_g$ (see p.26 of \cite{lee} for the ``musical isomorphism'' notation of $^\flat$ and $^\sharp$). Here we use $\nu$ to denote the outward pointing normal vector field. By $H(x)$, we denote the mean curvature of $\partial M$ at $x$. We also set 
	$$\II_x(V) := \II_x(V,V),\ \ V\in T_x\partial M$$
	to be the scalar second fundamental quadratic form (see pages 235 and 381 of \cite{lee} for definitions). Note that, in defining $\II$ and the shape operator, we will follow geometry literature (e.g.\cite{lee}) and use the {\em inward} pointing normal so that the sphere embedded in $\R^3$ would have positive mean curvature in our convention.
	
	\subsection{Boundary normal coordinates}
	In this work, we will often use the boundary normal coordinates. Therefore, we briefly recall its construction. For a fixed $x_0\in \partial M$, we will denote by $B_h(\rho;x_0) \subset \partial M$ the geodesic disk of radius $\rho>0$ (with respect to the metric $h$) centered at $x_0$ and $\D_\rho$ to be the Euclidean disk in $\R^2$ of radius $\rho$ centered at the origin. In what follows $\rho$ will always be smaller than the injectivity radius of $(\partial M, h)$. Letting $t = (t_1,t_2,t_3) \in \R^3$, we will construct a coordinate system $x(t; x_0)$ by the following procedure:
	
	Write $t\in \R^3$ near the origin as $t  = (t',t_3)$ for $t' = (t_1,t_2)\in \D_\rho$. Define first 
	$$x((t',0); x_0) := {\rm {exp}}_{x_0;h} (t_1 E_1+ t_2 E_2),$$
	where ${\rm{exp}}_{x_0;h} (V)$ denotes the time $1$ map of $h$-geodesics with initial point $x_0$ and initial velocity $V\in T_{x_0}\partial M$. The coordinate $t'\in \D_\rho \mapsto x((t',0); x_0)$ is then an $h$-geodesic coordinate system for a neighborhood of $x_0$ on the boundary surface $\partial M$. We can extend this to become a coordinate system for points in $M$ near $x_0$ so that $t \mapsto x(t;x_0)$ is a boundary normal coordinate system with $t_3 >0$ in $M$ as the boundary defining function. Readers wishing to know more about boundary normal coordinates can refer to \cite{leeuhlmann} for a brief recollection of the basic properties we use here and Prop 5.26 of \cite{lee} for a detailed construction.  
	
	For convenience we will write $x(t'; x_0)$ in place of $x((t',0);x_0)$. The boundary coordinate system $ t\mapsto x(t;x_0)$ has the advantage that the metric tensor $g$ can be expressed as
	\begin{eqnarray}
	\label{metric bnc}
	\sum_{j,k=1}^3 g_{j,k}(t) dt_j dt_k = \sum_{\alpha, \beta = 1}^2 h_{\alpha,\beta}(t',t_3) dt_\alpha dt_\beta + dt_3^2,
	\end{eqnarray}
	where $h_{\alpha,\beta}(t', 0) = h_{\alpha,\beta}(t')$ is the expression of the boundary metric $h$ in the $h$-geodesic coordinate system $x(t';x_0)$. 
	
	We will also use the rescaled version of this coordinate system. For $\varepsilon >0$ sufficiently small we define the (rescaled) $h$-geodesic coordinate by the following map
	\begin{equation}\label{res coord}
		x^{\varepsilon}(\cdot ; x_0) : t' = (t_1, t_2) \in \D \mapsto x(\varepsilon t'; x_0) \in B_h(\varepsilon;x_0),
	\end{equation}
	where $ \mathbb{D}$ is the unit disk in $\R^2$.

	\section{The main results}\label{results}
	Here we state and disscus the main results of this paper. We begin with formulating the problem. Let $(X_t, \mathbb{P}_x)$ be the Brownian motion on $M$ starting at $x$, generated by the differential operator
	\begin{equation*}
	u \rightarrow \Delta_gu + g(F,\nabla_gu),
	\end{equation*}
	where $F$ is a force field given by potential $\phi$, that is $F=\nabla_g\phi$. For $x^*\in \partial M$ and  $\varepsilon>0$, let $\Gamma_{\varepsilon,a}\subset \partial M$ be a small geodesic ellipse define as
	\begin{equation}\label{ellipse}
		\Gamma_{\varepsilon, a} := \{\exp_{x^*;h}(\varepsilon t_1 E_1(x^*) +\varepsilon t_2 E_2(x^*)) \mid t_1^2 + a^{-2} t_2^2 \leq 1 \}.
	\end{equation}
	Denote by $\tau_{\Gamma_{\varepsilon,a}}$ the first time the Brownian motion $X_t$ hits $\Gamma_{\varepsilon,a}$, that is
	\begin{equation*}
	\tau_{\Gamma_{\varepsilon,a}} := \inf \{ t\geq 0: X_t \in \Gamma_{\varepsilon,a}\}.
	\end{equation*}
	We aim to investigate the mean sojourn time, that is the expected value
	$$u_{\varepsilon,a}(x) := \mathbb{E}[\tau_{\Gamma_{\varepsilon, a}} | X_0 = x],$$
	and its avarege expected value over $M$
	$$|M|^{-1} \int_M \mathbb{E}[\tau_{\Gamma_{\varepsilon, a}} | X_0 = x] d_g(x).$$
	Namely, we want to derive asymptotic expansion for these quantities as $\varepsilon\to 0$.
	
	In the Appendix, we show that the mean sojourn time, $u_{\varepsilon,a}$, satisfies the following elliptic mix boundary value problem
	\begin{equation}\label{main bvp}
		\begin{cases}
		\Delta_gu_{\varepsilon,a} + g(F,\nabla_gu_{\varepsilon,a}) = -1; \\
		\left.u_{\varepsilon,a}\right\vert_{\Gamma_{\varepsilon,a}}=0;\\
		\left.\partial_\nu u_{\varepsilon,a}\right\vert_{\partial M\setminus \Gamma_{\varepsilon,a}}=0.
		\end{cases}
	\end{equation}
	
	Let $G(\cdot,\cdot)\in \mathcal{D}'(M\times M)$ solve \eqref{NGF System}. For $x\in M^o$, Greens formula used in conjunction with \eqref{NGF System} and \eqref{main bvp} yields the following integral representation for the mean sojourn time $u_{\varepsilon,a}$	
	\begin{align}\label{org_int_eq_main}
	u_{\varepsilon,a}(x) = \mathcal{G}(x) + \int_{\partial M}G(x,z) \partial_{\nu_z} u_{\varepsilon,a}(z) d_h(z) + C_{\varepsilon,a}
	\end{align}
	where 
	\begin{equation*}
	C_{\varepsilon,a} := \frac{1}{|\partial M|} \int_{\partial M} u_{\varepsilon,a}(z) d_h(z), \qquad \mathcal{G}(x) := \int_M G(x,z) d_h(z).
	\end{equation*}
    Where $\mathcal{G}$ satisfies the following boundary value problem
	\begin{equation}\label{eq for mathcal G for theorem}
		\begin{cases}
		\Delta_{g} \mathcal{G}(x) + g ( F(x), \nabla_{g}\mathcal{G}(x) ) = -1;\\
		\partial_{\nu}\mathcal{G}(x)\left.\right|_{x\in\partial M} = -\frac{\Phi(x)}{|\partial M|};\\
		\int_{\partial M} \mathcal{G}(x) d_h(x) =0.
		\end{cases}
	\end{equation}
	Where $\Phi$ is a weighted volume defined in Theorem 3.2. 
	
	In order to derive an asymptotic expansion for $u_{\varepsilon,a}$, we need to derive asymptotics for $C_{\varepsilon,a}$. The first step within said program is to exploit the vanishing Dirichlet boundary condition of $\Gamma_{\varepsilon,a}$. In doing so, we restrict $x\in M^o$ to $x\in \Gamma_{\varepsilon,a}$ which yields
	\begin{align*}
	    0 = \left.\mathcal{G}(x)\right\vert_{\Gamma_{\varepsilon,a}}+\left.\left(\int_{\partial M} G(x,z) \partial_{\nu_z}u_{\varepsilon,a}(z)d_h(z)\right)\right\vert_{\Gamma_{\varepsilon,a}} +C_{\varepsilon,a}.
	\end{align*}
	What follows, is the definition of the restricted Neumann Greens function, defined as the Schwartz kernel to the operator 
	\begin{align*}
	    G_{\partial M}: f \mapsto \left.\left(\int_{\partial M} G(x,y) f(y) d_h(y)\right)\right\vert_{\partial M}.
	\end{align*}
	Here $G_{\partial M}: C^\infty(\partial M)\rightarrow C^\infty(\partial M)$ can be extended to $H^k(\partial M)$. Using a parametrix construction, in conjunction with Fourier techniques and homogeneous distributions, we can show that the kernel $G_{\partial M}$ attains the following form for $x,y\in \partial M$ near the diagonal.  
	\begin{proposition}\label{G exp sec 3}
		There exists an open neighbourhood of the diagonal 
		$${\rm Diag}:=\{(x,y)\in \partial M\times \partial M \mid x=y\} $$
		such that in this neighbourhood, the singularity structure of $G_{\partial M}(x,y)$ is given by:
		\begin{align}\label{G expansion}
			\nonumber G_{\partial M}(x,y) = &\frac{1}{2\pi} d_g(x,y)^{-1} - \frac{1}{4\pi}  {(H(x) + \partial_{\nu}\phi(x))} \log d_{h}(y,x) \\
			& + \frac{1}{16\pi} \left(\II_x\left (\frac{\exp_{x; h}^{-1} (y)}{|\exp_{x;h}^{-1} (y)|_h}\right) - \II_x\left (\frac{*\exp_{x;h}^{-1} (y)}{|\exp_{x;h}^{-1} (y)|_h}\right)\right)\\
			\nonumber& -\frac{1}{4\pi}h_x\left(F^{\parallel}(x), \frac{\exp_{x; h}(y)}{|\exp_{x; h}(y)|_h}\right)  + R(x,y),
		\end{align}
		where $F^{\parallel}$ is the tangential part of the force field $F$ and $R(\cdot,\cdot) \in C^{0,\mu}(\partial M\times \partial M)$, for all $\mu<1$, is called {\em the regular part} of the Green's function and $*$ is the Hodge-star operator (i.e. rotation by $\pi/2$ on the surface $\partial M$).
	\end{proposition}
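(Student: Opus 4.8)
The plan is to construct the Neumann Green's function $G$ of \eqref{NGF System} near a point of the diagonal by an explicit parametrix, to recognise the restricted operator $G_{\partial M}$ as a classical (polyhomogeneous) pseudodifferential operator on the surface $\partial M$, and then to recover its Schwartz kernel near the diagonal by inverting the Fourier transform one homogeneity level at a time. This is the strategy used in \cite{NTT} for $F=0$; the only genuinely new point is to follow the first-order drift term $g(F,\nabla_g\cdot)$ through the construction and isolate the invariants of $F$ it contributes, namely $\partial_\nu\phi$ inside the logarithmic coefficient and the new tangential term $h_x(F^{\parallel},\cdot)$.

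First I would localise. Fix $x_0\in\partial M$ and pass to the boundary normal coordinates $t\mapsto x(t;x_0)$ of \eqref{metric bnc}, in which $\partial M$ is the slice $\{t_3=0\}$, $t'\mapsto x(t';x_0)$ are $h$-geodesic coordinates at $x_0$ and the metric has the split form \eqref{metric bnc}; then Taylor expand $h_{\alpha\beta}(t',t_3)$ and $\phi$ at $t=0$. The interior operator in \eqref{NGF System} is the formal transpose $L^{t}=\Delta_g-\mathrm{div}_g(F\,\cdot)$ of the generator $L=\Delta_g+g(F,\nabla_g\cdot)$; in these coordinates it has the shape $\partial_{t_3}^2+Q(t,\partial_{t'})-(F\cdot\nu)\partial_{t_3}-F^{\parallel}\!\cdot\nabla_{t'}+(\text{zeroth order})$, with $Q$ tangential and elliptic, principal part $h^{\alpha\beta}(t',t_3)\partial_\alpha\partial_\beta$. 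Two features matter: the $t_3$-dependence of the coefficients of $Q$ is governed at first order by the second fundamental form $\II$, whose trace is (up to sign) $H$ and whose trace-free part will produce the anisotropic $\II$ term; and the first-order part carries the normal component $F\cdot\nu=\partial_\nu\phi$ and the tangential component $F^{\parallel}$ of the force field. It is convenient at this point to conjugate by $e^{\phi/2}$ — equivalently, to use that $L$ is symmetric with respect to the measure $e^{\phi}d_g$ — which removes the interior normal drift $-(F\cdot\nu)\partial_{t_3}$ at the price of a new potential and of a contribution $\tfrac12\partial_\nu\phi$ to the effective Robin boundary operator; this is the mechanism by which $H$ and $\partial_\nu\phi$ enter together in the logarithmic term, while a tangential first-order term proportional to $F^{\parallel}$ survives.

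Next I would factor $L^{t}$ near $\partial M$ in the standard way as $(\partial_{t_3}+B_-)(\partial_{t_3}+B_+)$ modulo a smoothing operator, where $B_\pm=B_\pm(t,\partial_{t'})$ are first-order tangential pseudodifferential operators whose full symbols are obtained recursively from the associated Riccati and transport equations. Imposing the (conjugated) Robin condition from the second line of \eqref{NGF System} and using the reflection built into this factorisation expresses $G_{\partial M}$, modulo a smoothing operator on $\partial M$ and the smooth nonlocal effect of the normalisation $\int_{\partial M}e^{-\phi}G\,d_h=0$, as a classical pseudodifferential operator of order $-1$ on $\partial M$ whose symbol is computable from those of $B_\pm$. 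I would compute this symbol to its first two homogeneity levels and decompose the order $-2$ piece into radial-even, non-radial-even and odd parts. Inverting the Fourier transform with the standard formulas for homogeneous symbols on a surface then yields: from the principal symbol, $\tfrac{1}{2\pi}d_g(x,y)^{-1}$, the factor $2$ over the free-space value $\tfrac{1}{4\pi}$ being the Neumann reflection; from the radial-even part of the order $-2$ symbol, $-\tfrac{1}{4\pi}\big(H(x)+\partial_\nu\phi(x)\big)\log d_h(x,y)$; from the non-radial-even part, fed by the trace-free part of $\II_x$, the term $\tfrac{1}{16\pi}\big(\II_x(v)-\II_x(*v)\big)$ with $v=\exp_{x;h}^{-1}(y)/|\exp_{x;h}^{-1}(y)|_h$; and from the odd part, carried by the surviving tangential drift, the term $-\tfrac{1}{4\pi}h_x(F^{\parallel}(x),v)$. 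What remains is represented by a symbol of order $\le -3$, whose kernel on the two-dimensional manifold $\partial M\times\partial M$ lies in $C^{0,\mu}$ for every $\mu<1$; this is the regular part $R$.

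The main obstacle is the order $-2$ symbol computation together with the identification of the geometric invariants. One has to keep track of the Taylor coefficients in $t_3$ of $h_{\alpha\beta}(t',t_3)$ and of $\sqrt{\det h}$, separating the part governed by $H$ (which feeds the logarithm) from the trace-free part of $\II$ (which feeds the anisotropic term); one has to carry the conjugation by $e^{\phi/2}$ carefully so that the combination $H+\partial_\nu\phi$ emerges with the correct coefficient; and one has to control all the constants in the Fourier inversion of homogeneous distributions on $\R^2$ in order to obtain $\tfrac{1}{2\pi}$, $\tfrac{1}{4\pi}$ and $\tfrac{1}{16\pi}$ exactly. A secondary but necessary point is to check that nothing of borderline homogeneity has been left behind, i.e. that the leftover symbol really is of order strictly below $-2$, so that $R$ is Hölder continuous rather than merely bounded.
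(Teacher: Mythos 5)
Your overall strategy coincides with the paper's: work in boundary normal coordinates, factor the second-order operator near $\partial M$ as a product of first-order tangential pseudodifferential factors (the Lee--Uhlmann factorisation), compute the symbol of the resulting boundary operator to its top two homogeneity levels, and recover the singular kernel by Fourier inversion of homogeneous symbols on $\R^2$. The cosmetic difference is that the paper reaches $G_{\partial M}$ not by constructing $G$ directly, but through the identity $I=G_{\partial M}\Lambda_F+P$ obtained from Green's identity applied to $L$-harmonic extensions, which exhibits $G_{\partial M}$ as a left parametrix of the Dirichlet-to-Neumann map $\Lambda_F$; this avoids any separate treatment of the Robin boundary condition in the parametrix construction and lets one import the Lee--Uhlmann symbol recursion for $\Lambda_F$ with the drift terms simply carried through in $\widetilde{E}$ and $\widetilde{Q}$. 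Both routes are sound at this level.

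There is, however, a genuine gap in the way you propose to feed the drift through the computation. You suggest conjugating by $e^{\phi/2}$, ``equivalently using that $L$ is symmetric for $e^{\phi}d_g$,'' and assert that this removes the \emph{normal} drift while a tangential first-order term proportional to $F^{\parallel}$ survives; you then locate the $h_x(F^{\parallel},\cdot)$ term of \eqref{G expansion} in the odd part of the order $-2$ symbol, ``carried by the surviving tangential drift.'' This bookkeeping is wrong: conjugation by $e^{\phi/2}$ eliminates \emph{all} first-order terms, since $e^{\phi/2}\big(\Delta_g+g(F,\nabla_g\cdot)\big)e^{-\phi/2}=\Delta_g-\tfrac12\Delta_g\phi-\tfrac14|\nabla_g\phi|_g^2$, a Schr\"odinger operator with no drift at all, normal or tangential; the potential is order zero and therefore does not affect the order $-2$ part of the symbol. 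Consequently, in the conjugated picture there is no ``odd part carried by the drift,'' and your derivation produces no $F^{\parallel}$ term. The term is in fact still present in the original $G_{\partial M}$, but in the conjugated route it arises from \emph{un}doing the conjugation: the smooth prefactor $e^{\pm\phi/2}$ multiplying the conjugated kernel must be Taylor-expanded about the diagonal, and its first-order increment against the leading $\tfrac{1}{2\pi}d_g(x,y)^{-1}$ singularity is exactly what regenerates the homogeneous degree-$0$ odd piece $-\tfrac{1}{4\pi}h_x(F^{\parallel}(x),v)$. Your plan does not perform this step. The paper avoids the issue entirely by keeping the drift in the factorisation: the tangential component of $F$ enters $\widetilde{q}_1$ and thence $a_0$, $p_{-2}$, and the Fourier inversion, so the $F^{\parallel}$ term really does come from the odd part of the symbol. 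If you want to retain the conjugation, you must add the Taylor-expansion-of-the-prefactor step explicitly; as written your argument would lose the last non-regular term of \eqref{G expansion}.
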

	We will use the formula in Proposition \ref{G exp sec 3} to derive the mean first arrival time of a Brownian particle on a Riemannian manifold with a single absorbing window which is a small geodesic ellipse. As mentioned earlier, our method extends to multiple windows but we present the single window case to simplify notations. We first state the result when the window is a geodesic {\em disk} of the boundary $\partial M$ around a fixed point since the statement is cleaner:
	\begin{theorem}
		\label{main theorem disk}
		Let $(M,g,\partial M)$ be a smooth compact Riemannian manifold of dimension three with boundary. Fix $x^*\in \partial M$ and let $\Gamma_{\varepsilon}$ be a boundary geodesic ball centered at $x^*$ of geodesic radius $\varepsilon >0$.\\
		i)  For each $x\notin \Gamma_{\varepsilon}$,
		$$ \mathbb{E}[\tau_{\Gamma_{\varepsilon}} | X_0 = x] = C_\varepsilon + \mathcal{G}(x) -\Phi(x^*) G(x^*,x) + r_\varepsilon (x),$$
		with $\|r_\varepsilon\|_{C^k(K)} \leq C_{k,K} \varepsilon$ for any integer $k$ and compact set $K\subset \overline M$ which does not contain $x^*$. The function $\mathcal{G}$ is the solution of \eqref{eq for mathcal G for theorem}. The constant $C_\varepsilon$ is given by
		\begin{align*}
		C_{\varepsilon} = \frac{\Phi(x^*)}{4\varepsilon} - \frac{(H(x^*) + \partial_{\nu}\phi(x^*)) \Phi(x^*)}{4\pi} \log\varepsilon + R(x^*, x^*) \Phi(x^*) - \mathcal{G}(x^*)\\
		- \frac{(H(x^*) + \partial_{\nu}\phi(x^*)) \Phi(x^*)}{4\pi} \left( 2\log 2 - \frac{3}{2}\right) + O(\varepsilon\log\varepsilon),
		\end{align*}
		where $R(x^*, x^*)$ is the evaluation at $(x,y) = (x^*, x^*)$ of the kernel $R(x,y)$ in Proposition \ref{G exp sec 3} and
		$$\Phi(x) := \int_{M} e^{\phi(z) - \phi(x)}d_g(z).$$
		ii) One has that the integral of $\mathbb{E}[\tau_{\Gamma_{\varepsilon, a}} | X_0 = x]$ over $M$ satisfies 
		$$\int_M \mathbb{E}[\tau_{\Gamma_{\varepsilon, a}} | X_0 = x] d_g (x)= C_\varepsilon |M| + \int_M \mathcal{G}(x) d_g(x) - \Phi(x^*) \int_{M} G(x, x^*) d_g(x).$$
	\end{theorem}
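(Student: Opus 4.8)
The plan is to turn the representation \eqref{org_int_eq_main} into a singular integral equation on the shrinking window, rescale it to the unit disk, invert the resulting operator perturbatively in $\varepsilon$, and extract $C_\varepsilon$ from an exact flux identity; the far-field claim and the spatial average then follow by substituting back. Throughout write $u_\varepsilon$ for the solution of \eqref{main bvp} with $\Gamma_\varepsilon$ the geodesic disk.

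First I would use that $\partial_\nu u_\varepsilon|_{\partial M}$ is supported in $\overline{\Gamma_\varepsilon}$ (by the Neumann part of \eqref{main bvp}) to restrict \eqref{org_int_eq_main} to $x\in\Gamma_\varepsilon$, where $u_\varepsilon$ vanishes, obtaining $0=\mathcal{G}(x)+C_\varepsilon+\int_{\Gamma_\varepsilon}G_{\partial M}(x,z)\,\partial_{\nu_z}u_\varepsilon(z)\,d_h(z)$ with $G_{\partial M}$ as in Proposition~\ref{G exp sec 3}. Passing to the rescaled coordinate $x=x^\varepsilon(\xi;x^*)$, $z=x^\varepsilon(\eta;x^*)$, $\xi,\eta\in\D$, and setting $\sigma_\varepsilon(\eta):=\varepsilon^2\,\partial_\nu u_\varepsilon(x^\varepsilon(\eta;x^*))$, I would insert the singularity structure of Proposition~\ref{G exp sec 3} and Taylor expand at $x^*$ the metric, the distances ($d_g(x^\varepsilon(\xi),x^\varepsilon(\eta))=\varepsilon|\xi-\eta|(1+O(\varepsilon^2))$, likewise $d_h$), the curvature data $H,\II$, the quantities $\partial_\nu\phi$ and $F^\parallel$, and the smooth function $\mathcal{G}$. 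The equation becomes $\mathcal{A}_\varepsilon\sigma_\varepsilon=-\mathcal{G}(x^\varepsilon(\cdot\,;x^*))-C_\varepsilon$ on $\D$ with
\begin{equation*}
\mathcal{A}_\varepsilon=\tfrac1\varepsilon T+(\log\varepsilon)\,T_L+T_0+O(\varepsilon\log\varepsilon),\qquad T\sigma(\xi):=\frac1{2\pi}\int_\D\frac{\sigma(\eta)}{|\xi-\eta|}\,d\eta,
\end{equation*}
where $T_L\sigma=-\tfrac{H(x^*)+\partial_\nu\phi(x^*)}{4\pi}\int_\D\sigma$ is rank one and $T_0$ collects the $\log|\xi-\eta|$, the $\II$-anisotropy, the $F^\parallel$, and the $R(x^*,x^*)$ contributions.

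Next I would invert $\mathcal{A}_\varepsilon$. Since $T/\varepsilon$ dominates, working in the weighted spaces in which the classical electrified-disk operator $T$ is an isomorphism one has $\mathcal{A}_\varepsilon^{-1}=\varepsilon T^{-1}-\varepsilon^2(\log\varepsilon)T^{-1}T_LT^{-1}-\varepsilon^2T^{-1}T_0T^{-1}+O(\varepsilon^3\log\varepsilon)$ with bounds uniform in $\varepsilon$; I would use only the explicit facts $T\big[(1-|\cdot|^2)^{-1/2}\big]\equiv\tfrac\pi2$ (hence $T^{-1}[1]=\omega:=\tfrac2\pi(1-|\cdot|^2)^{-1/2}$, $\int_\D\omega=4$) and the self-adjointness of $T$, so that $\sigma_\varepsilon=-(\mathcal{G}(x^*)+C_\varepsilon)\mathcal{A}_\varepsilon^{-1}[1]$ up to odd and $O(\varepsilon^2)$ corrections. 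To pin down $C_\varepsilon$ I would test the PDE in \eqref{main bvp} against $v=e^{\phi}$, which lies in the kernel of the formal adjoint of $u\mapsto\Delta_gu+g(F,\nabla_gu)$ and satisfies its homogeneous conormal condition $\partial_\nu v-(F\cdot\nu)v=0$; this gives the exact identity $\int_{\Gamma_\varepsilon}e^{\phi(z)}\partial_{\nu_z}u_\varepsilon(z)\,d_h(z)=-\int_M e^{\phi}\,d_g=-e^{\phi(x^*)}\Phi(x^*)$, that is $\int_\D e^{\phi(x^\varepsilon(\eta))}\sigma_\varepsilon(\eta)\,d\eta=-e^{\phi(x^*)}\Phi(x^*)+O(\varepsilon^2)$ in rescaled variables. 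Since the odd corrections integrate to zero against the nearly constant weight, this yields $\mathcal{G}(x^*)+C_\varepsilon=\Phi(x^*)\big(\int_\D\mathcal{A}_\varepsilon^{-1}[1]\,d\eta\big)^{-1}(1+O(\varepsilon^2))$. Expanding the denominator: $\int_\D\omega=4$; the $\log\varepsilon$-term gives $\tfrac{4(H(x^*)+\partial_\nu\phi(x^*))}{\pi}\varepsilon^2\log\varepsilon$ because $T^{-1}T_L\omega$ is a multiple of $\omega$; in the $\varepsilon^2$-term the $\II$ and $F^\parallel$ pieces cancel since, after self-adjointness, they reduce to $\int_\D\int_\D(1-|\xi|^2)^{-1/2}(1-|\eta|^2)^{-1/2}\cos(k\arg(\eta-\xi))\,d\xi\,d\eta=0$ ($k\in\{1,2\}$; rotating both variables by $\alpha$ multiplies the integral by $\cos k\alpha$); the $R(x^*,x^*)$ piece gives $-16R(x^*,x^*)\varepsilon^2$; and the $\log|\xi-\eta|$ piece needs the logarithmic potential $\int_\D\log|\xi-\eta|\,\omega(\eta)\,d\eta=4\big(\log(1+\sqrt{1-|\xi|^2})-\sqrt{1-|\xi|^2}\big)$ (obtained from $\int_0^{2\pi}\log|s-re^{i\theta}|\,d\theta=2\pi\log\max(s,r)$), whose pairing with $\omega$ equals $16\big(2\log2-\tfrac32\big)$. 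Inserting all this and subtracting $\mathcal{G}(x^*)$ reproduces the asserted formula for $C_\varepsilon$ with remainder $O(\varepsilon\log\varepsilon)$ once the inversion bounds are applied to the lower-order terms of $\mathcal{A}_\varepsilon$.

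Finally, for the far field and part~(ii): for $x$ in a compact $K\subset\overline M$ with $x^*\notin K$, $G(x,\cdot)$ is smooth near $\overline{\Gamma_\varepsilon}$, so expanding $G(x,z)$ about $z=x^*$ in \eqref{org_int_eq_main} and using $\int_{\Gamma_\varepsilon}\partial_{\nu_z}u_\varepsilon\,d_h=-\Phi(x^*)+O(\varepsilon)$ (from the flux identity, $e^{-\phi}$ being nearly constant on $\Gamma_\varepsilon$ and $\partial_\nu u_\varepsilon\le0$) gives $\int_{\partial M}G(x,z)\partial_{\nu_z}u_\varepsilon(z)\,d_h(z)=-\Phi(x^*)G(x,x^*)+O(\varepsilon)$ in $C^k(K)$, which together with the expansion of $C_\varepsilon$ is (i); and (ii) follows by integrating \eqref{org_int_eq_main} over $x\in M$ (Fubini is legitimate since $G(\cdot,x^*)\in L^1(M)$ in dimension three), using that $z\mapsto\int_M G(x,z)\,d_g(x)$ solves an elliptic boundary value problem with smooth data hence is continuous near $x^*$, and once more invoking the concentration of $\partial_\nu u_\varepsilon$ at $x^*$ with total weight $-\Phi(x^*)$ (equivalently, integrating the expansion of~(i), the integrable remainder contributing at lower order). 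I expect the main obstacle to be the two middle steps: carrying out the perturbative inversion of $\mathcal{A}_\varepsilon$ in the correct weighted function spaces with estimates uniform as $\varepsilon\to0$, propagating the geometric Taylor remainders to the stated order, and correctly extracting the $O(1)$ constant — in particular the rotational-symmetry cancellation of the second fundamental form and the tangential force, and the evaluation of the double logarithmic integral over the disk that yields the constant $2\log2-\tfrac32$.
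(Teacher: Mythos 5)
Your proposal is correct and follows essentially the same route as the paper: restrict the Green's-function representation to the shrinking window, rescale to the unit disk, recognize the leading singular integral operator as the electrified-disk (equilibrium-charge) operator $L_1$ (your $T=\tfrac{1}{2\pi}L_1$, with $T[(1-|\cdot|^2)^{-1/2}]=\pi/2$), invert perturbatively in $\varepsilon$ in the $H^{1/2}(\D)^*\to H^{1/2}(\D)$ scale, and pin down $C_\varepsilon$ with the exact flux/compatibility identity $\int_{\Gamma_\varepsilon}e^\phi\partial_\nu u_\varepsilon=-\int_Me^\phi$ obtained by pairing against $e^\phi$. Your arithmetic for the denominator $\int_\D\mathcal{A}_\varepsilon^{-1}[1]=4\varepsilon+\tfrac{4(H+\partial_\nu\phi)}{\pi}\varepsilon^2\log\varepsilon-16R(x^*,x^*)\varepsilon^2+\tfrac{4(H+\partial_\nu\phi)}{\pi}(2\log2-\tfrac32)\varepsilon^2+\cdots$ reproduces the stated $C_\varepsilon$, and your evaluation of the logarithmic potential of $\omega$ and the double pairing $16(2\log2-\tfrac32)$ are both correct.

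The only genuine, if minor, deviations from the paper's proof: (1) you kill the $\II$-anisotropy and $F^\parallel$ contributions in one stroke by the rotational-symmetry argument $\int\int\omega(\xi)\omega(\eta)\cos(k\arg(\eta-\xi))\,d\xi\,d\eta=0$, whereas the paper disposes of the $F^\parallel$ piece by an explicit reflection $(s_1,t_1)\mapsto(-s_1,-t_1)$ and cites Lemma 4.6 of \cite{NTT} for the second-fundamental-form integral; (2) for the far-field remainder estimate you use the sign $\partial_\nu u_\varepsilon\le 0$ on $\Gamma_\varepsilon$ to control $\int|\partial_\nu u_\varepsilon|$ by the total flux, whereas the paper propagates the $H^{1/2}(\D)^*$ bound on $\psi'_\varepsilon$ — your argument is more elementary and equally valid; (3) for part (ii) you invoke Fubini directly, while the paper approximates $\partial_\nu u_\varepsilon$ by smooth $f_j$ in $H^{-1/2}$ and then uses the adjoint symmetry $G(z,y)=e^{\phi(y)-\phi(z)}G(y,z)$ — the latter is the cleaner way to make the interchange rigorous and to obtain the exact (error-free) identity of part (ii), since Fubini against a distributional flux needs the extra justification the paper supplies. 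These are presentational rather than conceptual differences; your proof strategy matches the paper's.
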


Theorem \ref{main theorem disk} does not realize the full power of Proposition \ref{G exp sec 3} as it does not see the non-homogeneity of the local geometry at $x^*$ (only the mean curvature $H(x^*)$ shows up). This is due to the fact that we are looking at windows which are geodesic balls. If we replace geodesic balls with geodesic {\em ellipses}, we see that the second fundamental form term in \eqref{G expansion} contributes to a term in $\mathbb{E}[\tau_{\Gamma_{\varepsilon, a}} | X_0 = x]$ which is the {\em difference} of principal curvatures. 

\begin{theorem}
	\label{main theorem}
	Let $(M,g,\partial M)$ be a smooth Riemannian manifold of dimension three with boundary. Fix $x^*\in \partial M$ and let $\Gamma_{\varepsilon,a}$ be a boundary geodesic ellipse given by \eqref{ellipse} with $\varepsilon >0$.\\
	i) For each $x\in M\backslash \Gamma_{\varepsilon,a}$, 
	$$ \mathbb{E}[\tau_{\Gamma_{\varepsilon}} | X_0 = x] = C_\varepsilon + \mathcal{G}(x) -\Phi(x^*)G(x^*,x) + r_\varepsilon (x),$$
	with $\|r_\varepsilon\|_{C^k(K)} \leq C_{k,K} \varepsilon$ for any integer $k$ and compact set $K\subset \overline M$ which does not contain $x^*$. The function $\mathcal{G}$ is the solution of \eqref{eq for mathcal G for theorem}. The constant $C_\varepsilon$ is given by
	\begin{align*}
	C_{\varepsilon,a} = & \frac{K_a\Phi(x^*)}{4\pi^2 a \varepsilon} - \frac{(H(x^*) + \partial_{\nu}\phi(x^*)) \Phi(x^*)}{4\pi} \log\varepsilon\\
	& + R(x^*, x^*) \Phi(x^*) - \mathcal{G}(x^*)\\
	& - \frac{(H(x^*) + \partial_{\nu}\phi(x^*))\Phi(x^*)}{16 \pi^3}\int_{\mathbb{D}} \frac{1}{ (1-|s'|^2)^{1/2}} \int_\D  \frac{\log\left((t_1 - s_1)^2 + a^2 (t_2-s_2)^2 \right)^{1/2}}{ (1-|t'|^2)^{1/2}} dt' ds'\\
	&+\frac{(\lambda_1(x^*) - \lambda_2(x^*))\Phi(x^*)}{64\pi^3}\int_{\mathbb{D}} \frac{1}{ (1-|s'|^2)^{1/2}} \int_\D \frac{(t_1 - s_1)^2 - a^2 (t_2 - s_2)^2}{(t_1 - s_1)^2 + a^2 (t_2 - s_2)^2} \frac{1}{ (1-|t'|^2)^{1/2}} dt' ds'\\
	& + O(\varepsilon\log\varepsilon),
	\end{align*}
	where $R(x^*, x^*)$ is the evaluation at $(x,y) = (x^*, x^*)$ of the kernel $R(x,y)$ in Proposition \ref{G exp sec 3}, $\mathbb{D}$ is the two dimensional unit disk centered at the origin, and 
	\begin{equation*}
		\Phi(x) := \int_{M} e^{\phi(z) - \phi(x)}d_g(z), \qquad K_a: = \frac{\pi}{2} \int_{0}^{2 \pi} {\left( \cos^2 \theta + \frac{\sin^2 \theta}{a^2} \right)^{-1/2}} d \theta
	\end{equation*} 
	
	ii) One has that the integral of $\mathbb{E}[\tau_{\Gamma_{\varepsilon, a}} | X_0 = x]$ over $M$ satisfies 
		$$\int_M \mathbb{E}[\tau_{\Gamma_{\varepsilon, a}} | X_0 = x] d_g (x)= C_\varepsilon |M| + \int_M \mathcal{G}(x) d_g(x) - \Phi(x^*) \int_{M} G(x, x^*) d_g(x).$$
	
	
	
\end{theorem}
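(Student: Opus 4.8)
The plan is to start from the integral representation \eqref{org_int_eq_main} and the compatibility condition obtained by restricting it to $\Gamma_{\varepsilon,a}$, then to analyze the resulting boundary integral equation for the flux $\psi_{\varepsilon,a} := \partial_{\nu}u_{\varepsilon,a}|_{\partial M}$, which by \eqref{main bvp} is supported on $\Gamma_{\varepsilon,a}$. Rescaling the window via the coordinates $x^\varepsilon(\cdot;x^*)$ of \eqref{res coord}, the equation on $\Gamma_{\varepsilon,a}$ becomes, to leading order, the integral equation on the unit disk $\D$ whose kernel is the leading singularity $\tfrac{1}{2\pi}d_g(x,y)^{-1}$ from Proposition \ref{G exp sec 3}; in the Euclidean rescaled picture this is (a multiple of) the classical electrified-disk operator $f\mapsto \int_{\D}|t'-s'|^{-1}f(s')\,ds'$ over an elliptical region. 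First I would record the explicit solution of the model equation: the density producing constant potential on the ellipse $\{t_1^2+a^{-2}t_2^2\le 1\}$ is proportional to $(1-t_1^2-a^{-2}t_2^2)^{-1/2}$, with normalizing constant given by the complete elliptic integral $K_a$; this yields the $O(1/\varepsilon)$ term $\frac{K_a\Phi(x^*)}{4\pi^2 a\varepsilon}$ after tracking the weight $\Phi$, which enters through the $e^{-\phi}$-weighting in \eqref{NGF System} and the relation between $u_{\varepsilon,a}$ on $\partial M$ and $C_{\varepsilon,a}$.

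Next I would set up the asymptotic expansion of $\psi_{\varepsilon,a}$ in powers of $\varepsilon$ (and $\varepsilon\log\varepsilon$), feeding the subleading terms of the kernel in \eqref{G expansion} — the $\log d_h$ term carrying $H(x^*)+\partial_\nu\phi(x^*)$, the second-fundamental-form term carrying $\lambda_1-\lambda_2$, and the $C^{0,\mu}$ regular part $R(x^*,x^*)$ — as successive right-hand sides against the model operator. Each correction requires pairing the model solution $(1-|t'|^2)^{1/2}$-type density against these kernels and integrating over the ellipse; after the substitution $t_2\mapsto a t_2$ this produces precisely the double integrals over $\D\times\D$ displayed in the statement, with the $\log((t_1-s_1)^2+a^2(t_2-s_2)^2)^{1/2}$ integrand from the curvature-potential term and the $\frac{(t_1-s_1)^2-a^2(t_2-s_2)^2}{(t_1-s_1)^2+a^2(t_2-s_2)^2}$ integrand from the $\II$-term (the Hodge-star piece contributing the relative minus sign). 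The constant $-\mathcal{G}(x^*)$ and the $R(x^*,x^*)\Phi(x^*)$ term fall out of evaluating $\mathcal{G}$ and the regular part at $x^*$ in the restricted identity, using \eqref{eq for mathcal G for theorem}. Once $C_{\varepsilon,a}$ is pinned down, plugging $\psi_{\varepsilon,a}$ back into \eqref{org_int_eq_main} for $x$ away from $x^*$ gives $u_{\varepsilon,a}(x)=C_{\varepsilon,a}+\mathcal{G}(x)-\Phi(x^*)G(x^*,x)+r_\varepsilon(x)$, since the flux concentrates as $\Phi(x^*)$ times a Dirac mass at $x^*$ to leading order; the $C^k(K)$ bound on $r_\varepsilon$ follows from elliptic regularity of $G(\cdot,z)$ in the first variable away from the diagonal together with the size of the next term in the flux expansion. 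Part (ii) is then immediate by integrating the pointwise formula over $M$ and using $\int_{\partial M}e^{-\phi}G\,d_h=0$ together with the definition of $\mathcal G$.

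The main obstacle I expect is the rigorous justification of the rescaled integral-equation analysis: showing that the operator $G_{\partial M}$ restricted to $\Gamma_{\varepsilon,a}$, after rescaling, is a genuinely small perturbation of the model operator on $\D$ in an appropriate weighted function space (e.g. the space in which $(1-|t'|^2)^{-1/2}$ densities are natural), so that the Neumann-series/iteration for $\psi_{\varepsilon,a}$ converges and the error terms are truly $O(\varepsilon\log\varepsilon)$ uniformly. This requires careful mapping properties of the model single-layer operator on the ellipse (its inverse is unbounded on $L^2$ but well-behaved on the Chebyshev-weighted scale), control of how the $d_g(x,y)^{-1}$ kernel deviates from the flat $|t'-s'|^{-1}$ kernel under the boundary normal coordinates \eqref{metric bnc} (the deviation being $O(\varepsilon)$ smoothing), and absorbing the $\log$-term and $R$-term contributions consistently. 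The remaining steps — solving the model problem, computing the elliptic integrals, and the elliptic-regularity estimate for $r_\varepsilon$ — are essentially the same bookkeeping as in the $F=0$ disk case of \cite{NTT}, now carried with the weight $\Phi$ and the extra tangential-force kernel, and I would treat them as routine given Proposition \ref{G exp sec 3}.
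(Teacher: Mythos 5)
Your overall strategy is the same as the paper's: restrict the Green's-function representation to the window, rescale by the coordinate map $(s_1,s_2)\mapsto x^\varepsilon(s_1,as_2;x^*)$ that carries the unit disk $\D$ to $\Gamma_{\varepsilon,a}$, treat the leading $\tfrac{1}{2\pi}d_g^{-1}$ piece as the operator $L_a$ with explicit solution $K_a^{-1}(1-|t'|^2)^{-1/2}$ of $L_a u=1$, invert by a Neumann series in the $H^{1/2}(\D)^*\to H^{3/2}(\D)$ scale, and feed the subleading pieces of \eqref{G expansion} in as corrections. That all matches.

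There is, however, a genuine gap in the way you treat the force field. When you list the subleading kernel contributions to be paired against the model density --- ``the $\log d_h$ term carrying $H(x^*)+\partial_\nu\phi(x^*)$, the second-fundamental-form term carrying $\lambda_1-\lambda_2$, and the $C^{0,\mu}$ regular part $R(x^*,x^*)$'' --- you omit the tangential-force term $-\tfrac{1}{4\pi}h_x\bigl(F^{\parallel}(x),\exp_{x;h}^{-1}(y)/|\exp_{x;h}^{-1}(y)|_h\bigr)$, which sits in \eqref{G expansion} at the same order as the $\log$ and $\II$ pieces. After rescaling it produces the operator $R_{F,a}$ (the analogue of Lemma \ref{l4}), and at face value one would expect it to generate a third double integral over $\D\times\D$ in $C_{\varepsilon,a}$. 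The theorem's formula has no such term, and the reason is not that the contribution is higher order but that it \emph{cancels}: the paper shows $\int_{\D}L_a^{-1}R_{F,a}L_a^{-1}1\,ds'=0$ by an odd-symmetry argument in the variables $(s_1,t_1)$ (and $(s_2,t_2)$), i.e. because $\langle(1-|s'|^2)^{-1/2},R_{F,a}(1-|t'|^2)^{-1/2}\rangle$ pairs an odd kernel against even densities. Labelling this ``routine bookkeeping'' skips the one observation that is specific to the drift case and without which the stated form of $C_{\varepsilon,a}$ would not come out. A related, smaller omission of the same flavour: in extracting $\Phi(x^*)$ from the compatibility condition $\int_{\partial M}e^{\phi}\partial_\nu u_{\varepsilon,a}\,d_h=-\int_M e^{\phi}\,d_g$, the paper must also show that the $O(\varepsilon)$ Taylor term of $e^{\phi(x^\varepsilon(\cdot))}$ drops out, which again uses $\int_{\D}t_\alpha\,[L_a^{-1}1](t')\,dt'=0$; your sketch tracks the weight $\Phi$ only qualitatively. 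Apart from these two points the plan is sound and aligned with the paper's proof, including part (ii), where the paper additionally justifies the Fubini interchange by approximating $\partial_\nu u_{\varepsilon,a}$ in $H^{-1/2}(\partial M)$ and uses the reciprocity identity $G(z,y)=e^{\phi(y)-\phi(z)}G(y,z)$.
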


\section{The Neumann Green's Function}\label{Green}
Here, we investigate the Neumann Green's function. Namely, we derive its singular structure on the boundary near the diagonal. By Neumann Green's function, $G(\cdot,\cdot)\in \mathcal{D}'(M\times M)$, we mean the solution to the following equation
\begin{equation}\label{Green function equation}
\begin{cases}
\Delta_{g,z} G(x,z) - \mathrm{div}_{g,z} (F(z)G(x,z)) = -\delta_x(z);\\
\partial_{\nu_z}G(x,z) - F(z)\cdot \nu_z G(x,z) \left.\right|_{\partial M} = -\frac{1}{|\partial M|};\\
\int_{\partial M}e^{-\phi(z)} G(x,z) d_h(z) = 0.
\end{cases}
\end{equation}
By using Green's identity to
\begin{equation*}
	e^{-\phi(y)}G(z,y) = -\int_{M} e^{-\phi(x)}G(z,x) \left( \Delta_g G(y,x) - \text{div}_g(F(x)G(y,x))\right)d_g(x),
\end{equation*}
we obtain
\begin{equation*}
	e^{-\phi(y)}G(z,y) - e^{-\phi(z)}G(y,z) = \frac{1}{|\partial M|} \left(\int_{\partial M}e^{-\phi(x)} G(y,x) d_h(x) - \int_{\partial M}e^{-\phi(x)} G(z,x) d_h(x)\right).
\end{equation*}
Therefore, by the last condition in \eqref{Green function equation}, we obtain
\begin{equation}\label{sym for G}
	G(z,y) = e^{\phi(y)-\phi(z)}G(y,z).
\end{equation}
Therefore, we can check
\begin{align*}
	\int_{\partial M} G(z,y)d_h(z) = \int_{\partial M} e^{\phi(y)-\phi(z)} G(y,z) d_h(z) = 0,
\end{align*}
\begin{align*}
	\partial_{\nu_z} G(z,y) = e^{\phi(y)-\phi(z)}\big( \partial_{\nu_z}G(y,z) - F(z)\cdot \nu_z G(y,z) \left.\right|_{\partial M}  \big) = -\frac{e^{\phi(y)-\phi(z)}}{|\partial M|}
\end{align*}
for $z\in \partial M$, and finally
\begin{align*}
	\Delta_{g,z} G(z,y) + &g ( F(z), \nabla_{g,z}G(z,y) ) = e^{\phi(y)-\phi(x)} \Big(  g(\nabla_{g,z}\phi(z),\nabla_{g,z}\phi(z) )G(y,z)\\
	& - \Delta_{g,z} \phi(z) G(y,z) - 2g(\nabla_{g,z}\phi(z),\nabla_{g,z}G(y,z)) +  \Delta_{g,z} G(y,z)\\
	&  - g(F(z),\nabla_{g,z}\phi(z) )G(y,z) + g(F(z),\nabla_{g,z}G(y,z) )\Big).
\end{align*}
Therefore, we conclude that $G(z,y)$ satisfies the differential equation (with respect to the first variable)
\begin{equation}\label{adj}
\begin{cases}
	\Delta_{g,z} G(z,y) + g ( F(z), \nabla_{g,z}G(z,y) ) = - \delta_y(z);\\
	\partial_{\nu_z}G(z,y)\left.\right|_{z\in\partial M} = -\frac{e^{\phi(y)-\phi(z)}}{|\partial M|};\\
	\int_{\partial M} G(z,x) d_h(z) =0.
\end{cases}
\end{equation}
This indicates that $\mathcal{G}$ solves the problem \eqref{eq for mathcal G for theorem}. Now, we let $f \in C^\infty(\partial M)$. Using this $f$, we introduce $u_f$, the solution to the following auxiliary problem 
\begin{align}\label{BVP for DN}
\Delta_g u_f(z) + g_z(F(z),\nabla_g u_f(z)) = 0, \;\ \left.u_f\right\vert_{z\in\partial M} = f \in C^\infty(\partial M).
\end{align}
By using Green's identity and the Divergence form theorem to
\begin{align}
	u_f(x) &= -\int_{M} u_f(z) \left(\Delta_{g,z} G(x,z) - \mathrm{div}_{g,z} (F(z)G(x,z))\right)d_g(z),	
\end{align}
we compute
\begin{align*}
	u_f(x) = &-\int_{M} \Delta_{g}u_f(z) G(x,z) d_g(z) - \int_{\partial M} \left(u_f(z) \partial_{\nu_z} G(x,z) - \partial_{\nu_z} u_f(z)  G(x,z) \right) d_h(z)\\
	&-\int_{M} G(x,z) g_z(F(z),\nabla_{g}u_f(z))d_g(z) + \int_{\partial M} u_f(z) G(x,z) F(z) \cdot \nu_z d_h(z).
\end{align*}
Since functions $u_f$, $G(x,z)$ satisfy \eqref{BVP for DN}, \eqref{Green function equation} respectively, we conclude that
\begin{align*}
u_f(x) = \int_{\partial M} G(x,z)\partial_{\nu_z} u_f(z) d_h(z) + \frac{1}{|\partial M|}\int_{\partial M} f(z) d_h(z).
\end{align*}

Restricting $x$ to $\partial M$, we have that
\begin{align}\label{compose with DN map}
\left.f(x)\right\vert_{\partial M} &= \left.\left(\int_{\partial M} G(x,z) \partial_{\nu_z}u_f(z) d_h(z)\right)\right\vert_{\partial M} + \frac{1}{|\partial M|}\int_{\partial M}f(z) d_h(z)
\end{align}
Let $\Lambda_F\in \Psi^1_{cl}(\partial M)$ be the Dirichlet-to-Neumann map associated to the boundary value problem \eqref{BVP for DN} and $G_{\partial M}(x,y)$ be the Schwartz kernel of the operator
\begin{equation*}
f \rightarrow \left.\left(\int_{\partial M} f(y)G(x,y) d_h(y)\right)\right\vert_{\partial M}
\end{equation*}
which takes $C^\infty(\partial M) \rightarrow C^\infty(\partial M)$. Then we can rewrite \eqref{compose with DN map} in the following way
\begin{align*}
f(x) = G_{\partial M} \Lambda_F f + Pf
\end{align*}
where $P$ is a smoothing operator, that is $P\in \Psi^{-\infty}(\partial M)$. In operator form this is
\begin{align}\label{left parametrix}
I = G_{\partial M} \Lambda_F + P.
\end{align}
Since $\Lambda_F$ is an elliptic pseudo-differential operator, we can construct $G_{\partial M}$ via a standard left parametrix construction. 

\subsection{Symbolic Expansion for the symbol of the Dirichlet-to-Neumann map}\label{SND}
	We compute here the first two terms of the asymptotic expansion for the symbol of the Dirichlet-to-Neumann map. We will use this to obtain the corresponding terms for the symbol of $G_{\partial M}$. We will follow \cite{LeeUhlmann89} and adapt some of their results for the drift case.

	In boundary normal coordinates, we decompose our differential operator in the following way
	\begin{align}\label{Decomposition for Laplacian}
	-\Delta_g-g_x(F,\nabla_g\cdot) = D_{x^3}^2+i\widetilde{E}(x)D_{x^3}+\widetilde{Q}(x,D_{x'}) 
	\end{align}
	where
	\begin{align*}
	\widetilde{E}(x)&:=-\frac{1}{2}\sum_{\alpha,\beta}h^{\alpha\beta}(x)\partial_{x^n}h_{\alpha\beta}(x)-F^n(x)\\
	\widetilde{Q}(x,D_{x'})&:=\sum_{\alpha,\beta} h^{\alpha\beta}(x)D_{x^\alpha}D_{x^\beta}\\ &- i\sum_{\alpha,\beta} \left(\frac{1}{2}h^{\alpha\beta}(x)\partial_{x^\alpha}\log\delta(x)+\partial_{x^\alpha}h^{\alpha\beta}(x)\right)D_{x^\beta}+i F^\beta(x)h^\alpha_\beta(x) D_{x^\alpha}.
	\end{align*}
	We will need the following modification of Proposition 1.1 in \cite{LeeUhlmann89}
	\begin{proposition}
		There exists a pseudo-differential operator $A_F(x,D_{x'})\in \Psi_{cl}^1(\partial M)$ which depends smoothly on $x^3$ such that 
		\begin{align*}
		-\Delta_g-g_x(F,\nabla_g\cdot) = \left(D_{x^3}+i\widetilde{E}(x)-iA_F(x,D_{x'})\right)\left(D_{x^3}+iA_F(x,D_{x'})\right),
		\end{align*}
		modulo a smoothing operator.
	\end{proposition}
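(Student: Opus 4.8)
The plan is to determine $A_F$ through its full symbol by the standard recursive (parametrix) construction, exactly as in Proposition~1.1 of \cite{LeeUhlmann89}, the only new feature being the first-order drift term, which enters merely the lower-order parts of $\widetilde E$ and $\widetilde Q$ and hence leaves the leading symbol unchanged.

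First I would expand the right-hand side of the asserted identity. Writing $A=A_F(x,D_{x'})$, allowed to depend smoothly on the parameter $x^3$, and using $D_{x^3}(iAu)=i(D_{x^3}A)u+iA(D_{x^3}u)$ (where $D_{x^3}A$ denotes the operator obtained by differentiating the symbol of $A$ in $x^3$), a direct computation gives
\begin{equation*}
\left(D_{x^3}+i\widetilde E-iA\right)\left(D_{x^3}+iA\right)=D_{x^3}^2+i\widetilde E D_{x^3}+\left(A^2-\widetilde E A+i(D_{x^3}A)\right),
\end{equation*}
the two terms $\pm iAD_{x^3}$ cancelling. Comparing with \eqref{Decomposition for Laplacian}, the proposition reduces to solving the operator Riccati equation
\begin{equation*}
A^2-\widetilde E A+i(D_{x^3}A)=\widetilde Q(x,D_{x'})\pmod{\Psi^{-\infty}}.
\end{equation*}

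Next I would pass to symbols. Let $a\sim\sum_{j\le 1}a_j$ be the sought classical symbol of $A$, with $a_j$ homogeneous of degree $j$ in $\xi'$, and let $\widetilde q=q_2+q_1+\cdots$ be the symbol of $\widetilde Q$, where $q_2(x,\xi')=\sum_{\alpha,\beta}h^{\alpha\beta}(x)\xi_\alpha\xi_\beta=|\xi'|_h^2$. Using the composition formula $\sigma(A^2)\sim\sum_\gamma\frac{1}{\gamma!}\partial_{\xi'}^\gamma a\,D_{x'}^\gamma a$, that $\widetilde E$ is a multiplication operator (so $\sigma(\widetilde E A)=\widetilde E\,a$), and that $i(D_{x^3}A)$ has symbol $\partial_{x^3}a$, the Riccati equation becomes the symbol identity
\begin{equation*}
\sum_\gamma\frac{1}{\gamma!}\partial_{\xi'}^\gamma a\,D_{x'}^\gamma a-\widetilde E\,a+\partial_{x^3}a=\widetilde q.
\end{equation*}
Collecting the terms homogeneous of degree $2$ yields $a_1^2=q_2$, and I take the positive root $a_1=|\xi'|_h$ (this is the choice that makes $D_{x^3}+iA$ the elliptic factor with positive principal symbol, the one relevant to the Dirichlet-to-Neumann map). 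Collecting the degree-$1$ terms gives $2a_1a_0=q_1-\sum_{|\gamma|=1}\partial_{\xi'}^\gamma a_1\,D_{x'}^\gamma a_1+\widetilde E a_1-\partial_{x^3}a_1$, which determines $a_0$ since $a_1=|\xi'|_h$ is nonvanishing for $\xi'\ne 0$; this is the first place the drift enters, through $q_1$ (which contains the summand $iF^\beta h^\alpha_\beta\xi_\alpha$) and through the $-F^n$ term in $\widetilde E$. Inductively, for each $j\le 0$ the degree-$j$ part of the identity has the form $2a_1a_{j-1}=(\text{a polynomial in }a_1,\dots,a_j\text{ and their }x,\xi'\text{ derivatives})$, hence determines $a_{j-1}$ uniquely and smoothly in all variables, including $x^3$ up to the boundary.

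Finally I would Borel-sum the series $\sum_{j\le 1}a_j$ to obtain a genuine classical symbol $a$ of order $1$ depending smoothly on $x^3$, and set $A_F(x,D_{x'}):=\operatorname{Op}(a)\in\Psi^1_{cl}(\partial M)$. By construction the symbol identity, and hence the operator Riccati equation, holds modulo $\Psi^{-\infty}$, which gives the claimed factorization modulo a smoothing operator; moreover $A_F$ is unique modulo $\Psi^{-\infty}$, since the recursion pins down every $a_j$. The step demanding the most care is the bookkeeping in the recursion—checking that the drift-modified lower-order data ($q_1$ and the $F^n$ contribution to $\widetilde E$) feed consistently into the transport-type relations for $a_0,a_{-1},\dots$—but there is no genuine obstacle, because the leading symbol $a_1=|\xi'|_h$ is exactly as in the driftless case and is elliptic, so each subsequent equation is solved simply by dividing by $2a_1$.
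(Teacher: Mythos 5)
Your argument follows the same route as the paper's: expand the right-hand side to obtain the operator Riccati equation $A_F^2-\widetilde{E}A_F+i[D_{x^3},A_F]=\widetilde{Q}$ modulo smoothing, pass to the symbol identity, and determine $a_1,a_0,\dots$ degree by degree. The one point of divergence is the sign of the principal symbol: you take $a_1=+\sqrt{\widetilde{q}_2}=|\xi'|_h$, whereas the paper fixes $a_1=-\sqrt{\widetilde{q}_2}$. Either choice yields a valid factorization, so the Proposition as literally stated is proved; however, your rationale---that the positive root makes $D_{x^3}+iA_F$ ``the elliptic factor relevant to the Dirichlet-to-Neumann map''---is backwards. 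With $x^3>0$ the interior boundary-defining coordinate, the inner factor must annihilate the solution bounded as $x^3\to +\infty$; in the flat half-space model this solution is annihilated by the factor with principal symbol $\xi_3 - i|\xi'|$, which corresponds to $a_1=-|\xi'|_h$. The negative root is the one the paper carries through the subsequent parametrix construction for $G_{\partial M}$, so keeping $a_1=+|\xi'|_h$ would require compensating sign changes downstream.
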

	
	\begin{proof}
		We construct an asymptotic series for the symbol of $A_F(x,D_{x'})$ using a homogeneity argument. The proposition can be re-stated as the construction of some pseudo-differential operator $A_F(x,D_{x'})$ modulo $\Psi^{-\infty}$ which satisfies the following statement 
		\begin{align*}
		0=\Delta_g+g_x(F,\nabla_g\cdot) + \left(D_{x^3}+i\widetilde{E}(x)-iA_F(x,D_{x'})\right)\left(D_{x^3}+iA_F(x,D_{x'})\right) 
		\end{align*}
		Due to decomposition \eqref{Decomposition for Laplacian}, the problem becomes the construction of a classical, first order pseudo-differential operator $A_F(x,D_{x'})$ which satisfies the following operator equation
		\begin{align*}
		A_F^2-\widetilde{Q}+i[D_{x^3},A_F]-\widetilde{E}A_F = 0
		\end{align*}
		modulo a smoothing operator.
		
		Reduction of the above operator equation to the pseudo-differential symbol calculus yields the following equation (modulo $S^{-\infty}$).
		\begin{equation}
		\sum_{|\mu|\geq 0} \frac{1}{\mu!} \partial_\xi^\mu a D_x^\mu a-\widetilde{q} + \partial_{x^3}a-\widetilde{E}a = 0 \label{Symbol equation}
		\end{equation}
		where $a$ is the full symbol of $A_X$ and $\tilde{q}$ is the full symbol of $\tilde{Q}$ given by 
		\begin{align*}
		\widetilde{q}(x,\xi') &= \sum_{\alpha,\beta}h^{\alpha\beta}(x)\xi_\alpha\xi_\beta -i \sum_{\alpha,\beta} \left(\frac{1}{2}h^{\alpha\beta}(x)\partial_{x^\alpha}\log\delta(x)+\partial_{x^\alpha}h^{\alpha\beta}(x) - F^\alpha(x)h^\beta_\alpha(x) \right)\xi_\beta \\ 
		&= : \widetilde{q}_2(x,\xi')+\widetilde{q}_1(x,\xi')
		\end{align*}
		Let us write
		\begin{align*}
		a(x,\xi') \sim \sum_{j\leq 1} a_j(x,\xi') 
		\end{align*}
		where $a_{j}\in S^{j}_{1,0}(T^*\partial M)$, and is homogeneous of degree $j$ in $\xi'$.
		Collecting terms which are homogeneous of degree $2$ in \eqref{Symbol equation} yields the following 
		\begin{align*}
		a_1^2 - \widetilde{q}_2 = 0 \implies a_1 = \pm \sqrt{\widetilde{q}_2}
		\end{align*}
		For consistency, we will choose $a_1:= -\sqrt{\widetilde{q}_2}$. Next, we will collect the terms which are homogeneous of degree 1 in \eqref{Symbol equation} as follows 
		\begin{align*}
		2a_0a_1+\sum_{\alpha} \partial_{\xi_\alpha}a_1 D_{x^\alpha}a_1-\widetilde{q_1}+\partial_{x^3} a_1-\widetilde{E}a_1 = 0
		\end{align*}
		Solving for $a_0$, we have that 
		\begin{align*}
		a_0 &= -\frac{1}{2a_1} \left(\sum_\alpha \partial_{\xi_\alpha} a_1D_{x^\alpha}a_1 - \widetilde{q_1} + \partial_{x^3}a_1 -\widetilde{E}a_1\right)\\
		&= \frac{1}{2\sqrt{\widetilde{q_2}}}\left(\sum_\alpha \partial_{\xi_\alpha}\sqrt{\widetilde{q_2}}D_{x^\alpha} \sqrt{\widetilde{q_2}} - \widetilde{q_1}- \partial_{x^3} \sqrt{\widetilde{q_2}}+\widetilde{E}\sqrt{\widetilde{q_2}}\right)
		\end{align*}
		We can apply the same recursive argument indefinitely for all degrees of homogeneity $1-j$ in order to obtain $a_{1-j}$ for every $j\geq 1$. For our purposes, the construction of $a_0$ is sufficient. This completes the proof. 
	\end{proof}
	Proposition 1.2 of \cite{LeeUhlmann89} yields the required calculation for the Dirichlet to Neumann operator, given by 
	\begin{align*}
	\Lambda_Ff = \left.\delta^{1/2}A_Ff dx^1\wedge dx^{2}\right\vert_{\partial M} \mod \Omega^2(\partial M) 
	\end{align*}
	Via a one-to-one correspondence, we can associate to this $n-1$ differential form a symbol, denoted by $b(x,\xi')\in S^1_{cl}(T^*\partial M)$ given by 
	\begin{equation}
	b(x,\xi') = \delta^{1/2}a(x,\xi') \label{Symbol for DtN}.
	\end{equation}

\subsection{Explicit Calculation for the Neumann Green's function.}
In this subsection, we will extract the singular part of $G_{\partial M}$ on the diagonal. 

Since $\Lambda_F$ is elliptic and \eqref{left parametrix}, we construct $G_{\partial M}$ as a standard left parametrix of order $-1$. Let $p(x,\xi')\in S^{-1}_{cl}(\partial M)$ be its symbol with the following asymptotic expansion
	\begin{align*}
	p(x,\xi') \sim \sum_{j\geq 1} p_{-j}(x,\xi'),
	\end{align*}
	where $p_{-j}\in S^{-j}(T^*\partial M)$ for each $j\geq 1$. From \eqref{left parametrix}, we deduce that  
	\begin{align*}
	1 
	&= (p\#b)(x,\xi')+S^{-\infty}(T^*\partial M) \\ 
	&= \sum_{|\mu|\geq 0} \frac{1}{\mu!} \partial_{\xi'}^\mu p D_x^\mu \delta^{1/2}a + S^{-\infty}(T^*\partial M),
	\end{align*}
	where $a$ is the symbol constructed in Section \ref{SND}. By matching the terms with the same orders, we obtain
	\begin{align*}
	p_{-1}(x,\xi') = \frac{\chi(\xi')}{\delta^{1/2}\sqrt{\widetilde{q_2}(x,\xi')}}
	\end{align*}
	\begin{align*}
	p_{-2}(x,\xi') = \frac{\chi(\xi')}{\delta^{1/2}\sqrt{\widetilde{q_2}(x,\xi')}}\left(\frac{\chi(\xi')a_0(x,\xi')}{\sqrt{\widetilde{q_2}(x,\xi')}}-\sum_{|\mu|=1} \partial_{\xi'}^\mu\frac{\chi(\xi')}{\sqrt{\widetilde{q_2}(x,\xi')}}D_x^\mu\delta^{1/2}\sqrt{\widetilde{q_2}(x,\xi')}  \right).
	\end{align*}
	Here, $\chi$ is a smooth cutoff function, non-zero outside of some sufficiently large neighbourhood of the origin. The choice of terms $p_{-j}$ for $j\geq 3$ can be done via standard, iterative parametrix arguments which were used to obtain $p_{-1}$ and $p_{-2}$. For the sake of brevity, such computations shall be omitted and are unnecessary for our purposes. It should be noticed that for $x=x^*$, the origin of our geodesic disk, as a result of boundary normal co-ordinates, the symbol terms are reduce to
	\begin{align*}
	    p_{-1}(x^*,\xi') = \frac{\chi(\xi')}{|\xi'|}.
	\end{align*}
	In addition, we have that 
	\begin{align*}
	    p_{-2}(x^*,\xi') = \frac{\chi(\xi')}{|\xi'|}\left(\frac{\chi(\xi')a_0(x^*,\xi')}{|\xi'|}-\left.\sum_{|\mu|=1}\partial_{\xi'}^\mu \frac{\chi(\xi')}{\sqrt{\widetilde{q_2}(x,\xi')}}D_x^\mu \sqrt{\widetilde{q_2}(x,\xi')}\right\vert_{x=x^*}\right).
	\end{align*}
	Notice that in $p_{-2}$, the $\delta^{1/2}$ term is peeled off by the product rule and vanishes as a result of $\partial_{x^k} h_{ij}=0$ at the origin. 

	The calculation of the principle symbol for $G_{\partial M}$ as well as the the next highest order term yield the following asymptotic for the kernel centred at $x^*$, evaluated at $x\in B_h(\rho; x^*)\subset \partial M$
	\begin{align*}
    G_{\partial M}(x^*,x) &= 
    \Phi^*G_{\partial M}(0,t') \\&= \frac{1}{4\pi^2}\int_{\mathbb{R}^2}e^{-i\xi'\cdot t'} p_{-1}(x^*,\xi') d\xi' + \frac{1}{4\pi^2}\int_{\mathbb{R}^2} e^{-i\xi'\cdot t'}p_{-2}(x^*,\xi') d\xi'+ \Psi_{cl}^{-3}(\partial M).
\end{align*}
The first term evaluates to 
\begin{align*}
    \frac{1}{4\pi^2}\int_{\mathbb{R}^2}e^{-i\xi'\cdot t'} p_{-1}(x^*,\xi') d\xi' = \frac{1}{4\pi^2\delta^{1/2}}\int_{\mathbb{R}^2}e^{-i\xi'\cdot t'} \frac{\chi(\xi')}{|\xi'|}d\xi'.
\end{align*}
Furthermore, we can split the above integral into a singular and regular part as follows
\begin{align*}
   \frac{1}{4\pi^2}\int_{\mathbb{R}^2}e^{-i\xi'\cdot t'} \frac{1}{|\xi'|}d\xi' + I_{\text{reg}}.
\end{align*}
The regular part is of no interest to us and will be lost in the error as this computation is done in order to isolate the most prevalent singularities occurring in $G_{\partial M}$. This idea can be carried forth in the computation for the second integral. We have that 
\begin{align*}
    \frac{1}{4\pi^2}\int_{\mathbb{R}^2}e^{-i\xi'\cdot t'} \frac{1}{|\xi'|}d\xi' &= \mathcal{F}(|\xi'|^{-1})(t') = \frac{1}{2\pi}|t'|^{-1}.
\end{align*}
Next, we use the previous arguments and proceed to split the second term up as follows
\begin{align*}
    \frac{1}{4\pi^2}\int_{\mathbb{R}^2}e^{-i\xi'\cdot t'} &\frac{a_0(0,\xi')}{|\xi'|^2}d\xi'\\
    & - \sum_{|\mu|=1}\frac{1}{4\pi^2}\int_{\mathbb{R}^2}e^{-i\xi'\cdot t'} \frac{1}{|\xi'|} \left.\partial_{\xi'}^\mu (\widetilde{q_2}(x,\xi'))^{-1/2}\right\vert_{x=x^*}\left. D_x^\mu \sqrt{\widetilde{q_2}(x,\xi')}\right\vert_{x=x^*} d\xi'.
\end{align*}
We recall that we have that $\widetilde{q_2}(x,\xi') = h^{\alpha\beta}(x)\xi_\alpha\xi_\beta$. Thus, we have the following identity 
\begin{align*}
    D_x^\mu \sqrt{h^{\alpha\beta}(x)\xi_\alpha\xi_\beta} = \frac{D_x^\mu h^{\alpha\beta}(x)}{2q_1(x,\xi')}.
\end{align*}
In boundary normal co-ordinates, centred at $x=x^*$, this identity evaluates to $0$. Thus, we have that the second integral is vanishing. We are now left to compute the following integral
\begin{align*}
    I(t'):= \frac{1}{8\pi^2} \left.\int_{\mathbb{R}^2} e^{-i\xi'\cdot t'}\frac{1}{|\xi'|^3}\left(\sum_\alpha \partial_{\xi_\alpha}\sqrt{\widetilde{q_2}}D_{x^\alpha} \sqrt{\widetilde{q_2}} - \widetilde{q_1}- \partial_{x^3} \sqrt{\widetilde{q_2}}+\widetilde{E}\sqrt{\widetilde{q_2}}\right)\right\vert_{x=x^*} d\xi'.
\end{align*}
Furthermore, since $D_{x^\alpha}\sqrt{\widetilde{q_2}(x,\xi')}=0$ and $\widetilde{q_1}(x,\xi')=iX^\alpha(x)\xi_\alpha$ in boundary normal co-ordinates, we have that $I$ is given by 
\begin{align*}
    I(t') &= \frac{1}{8\pi^2} \int_{\mathbb{R}^2} e^{-i\xi'\cdot t'}\frac{1}{|\xi'|^3}\left.\left(\widetilde{E}|\xi'|-\partial_{x^3}\sqrt{h^{\alpha\beta}(x)\xi_\alpha\xi_\beta} -i F^\alpha(x)\xi_\alpha\right)\right\vert_{x=x^*}d\xi' \\ 
    &= \frac{\widetilde{E}(x^*)}{8\pi^2} \int_{\mathbb{R}^2} e^{-i\xi'\cdot t'} \frac{1}{|\xi'|^2} d\xi' - \frac{1}{8\pi^2} \int_{\mathbb{R}^2} e^{-i\xi'\cdot t'} \left.\frac{\partial_{x^3}\sqrt{h^{\alpha\beta}(x)\xi_\alpha\xi_\beta}}{|\xi'|^3} \right\vert_{x=x^*}d\xi'\\
    &- \frac{i}{8\pi^2} F^\alpha(x^*)\int_{\mathbb{R}^2} e^{-i\xi'\cdot t'} \frac{\xi_\alpha}{|\xi'|^3}d\xi'.
\end{align*}
We calculate each term in the integral in order of increasing difficulty. Using proposition 8.17 [Lee] we can ascertain the following
\begin{align*}
    \widetilde{E}(x^*) = \left.-\frac{1}{2}\sum_{\alpha,\beta} h^{\alpha\beta}(x)\partial_{x^3}h_{\alpha\beta}(x)\right\vert_{x=x^*}-F^3(x^*) = 2H(x^*)-F^3(x^*).
\end{align*}
Where $H(x^*)$ denotes the mean curvature of $(M,g,\partial M)$ at $x^*$. Thus, we have that
\begin{align*}
    \frac{\widetilde{E}(x^*)}{8\pi^2} \int_{\mathbb{R}^2} e^{-i\xi'\cdot t'} \frac{1}{|\xi'|^2} d\xi' = \frac{2H(x^*)-F^3(x^*)}{2}\mathcal{F}(|\xi'|^{-2}) = -\frac{2H(x^*)-F^3(x^*)}{4\pi}\log|t'|.
\end{align*}
Next, we calculate the third term by making the following observation
\begin{align*}
    \partial_{\xi_\alpha} |\xi'|^{-1} = -\frac{\xi_\alpha}{|\xi'|^3}.
\end{align*}
We can re-write the third term as follows 
\begin{align*}
    -\frac{i}{8\pi^2}\sum_\alpha F^\alpha(x^*) \int_{\mathbb{R}^2} e^{-i\xi'\cdot t'} \frac{\xi_\alpha}{|\xi'|^3}d\xi' &= \frac{i}{8\pi^2} \sum_\alpha F^\alpha(x^*)\int_{\mathbb{R}^2} e^{-i\xi'\cdot t'} \partial_{\xi_\alpha}|\xi'|^{-1} d\xi' \\ 
    &= \frac{1}{8\pi^2}\sum_\alpha t_\alpha F^\alpha(x^*) \int_{\mathbb{R}^2} e^{-i\xi'\cdot t'}  |\xi'|^{-1} d\xi'\\ 
    &= \frac{1}{2}\sum_\alpha t_\alpha F^\alpha(x^*)\mathcal{F}(|\xi'|^{-1})(t') \\ 
    &= \sum_\alpha F^\alpha(x^*) \frac{t_\alpha}{4\pi|t'|}.
\end{align*}
Lastly, we compute the second term. It should be noted that via rotation with basis $E_1^\flat \wedge E_2^\flat \wedge \nu^\flat$, we find that the scalar second fundamental form is diagonalised in our co-ordinate system. Thus, we have the following
\begin{align*}
    &-\frac{1}{8\pi^2} \int_{\mathbb{R}^2}e^{-i\xi'\cdot t'} \frac{\partial_{x^3} |\xi'|_h}{|\xi'|^3} d\xi'\\
    &= -\frac{1}{8\pi^2}\int_{\mathbb{R}^2} e^{-i\xi'\cdot t'} \frac{\left.\left(\partial_{x^3}h^{11}(x) + 2\partial_{x^3}h^{12}(x)+\partial_{x^3}h^{22}(x)\right)\right\vert_{x=x^*}}{|\xi'|^4} d\xi' \\
    &= -\frac{1}{8\pi^2} \int_{\mathbb{R}^2} e^{-i\xi'\cdot t'} \frac{\lambda_1(x^*)\xi_1^2 +\lambda_2(x^*) \xi_2^2}{|\xi'|^4}d\xi' \\ 
    &= \frac{1}{4\pi^2}\left(\frac{\lambda_1(x^*)+\lambda_2(x^*)}{2}\pi\log|t'|-\frac{\pi}{4}\left(\frac{\lambda_2(x^*)t_1^2+\lambda_1(x^*)t_2^2}{|t'|^2}-\frac{\lambda_1(x^*)t_1^2+\lambda_2(x^*)t_2^2}{|t'|^2}\right)\right)\\
\end{align*}
Here $\lambda_1,\lambda_2$ denote the associated principle curvatures. Therefore, we have that $I(t')$ is given by
\begin{align*}
    I(t') = &-\frac{2H(x^*)-F^3(x^*)}{4\pi}\log|t'|\\ &+\frac{1}{4\pi^2}\left(H(x^*)\pi\log|t'|-\frac{\pi}{4}\left(\frac{\lambda_2(x^*)t_1^2+\lambda_1(x^*)t_2^2}{|t'|^2}-\frac{\lambda_1(x^*)t_1^2+\lambda_2(x^*)t_2^2}{|t'|^2}\right)\right)\\
    &+\frac{1}{4\pi} \sum_\alpha \frac{F^\alpha(x^*) t_\alpha}{|t'|}
\end{align*}
Therefore, we have that 
\begin{align*}
    I(\exp^{-1}_{x^*}(x)) =& -\frac{H(x^*)}{4\pi}\log d_h(x^*,x) + \frac{F^3(x^*)}{4\pi}\log d_h(x^*,x) \\
    &+ \frac{1}{16\pi}\left(\text{II}_{x^*}\left(\frac{\exp_{x^*}^{-1}(x)}{|\exp_{x^*}^{-1}(x)|_h}\right)-\text{II}_{x^*}\left(\frac{\star\exp_{x^*}^{-1}(x)}{|\exp_{x^*}^{-1}(x)|_h}\right)\right)\\
    &+ \frac{1}{4\pi}h_{x^*}\left(F^{\parallel}(x^*),\frac{\exp_{x^*}^{-1}(x)}{|\exp_{x^*}^{-1}(x)|_h}\right).
\end{align*}
This yields the following asymptotic for $G_{\partial M}$ 
\begin{align*}
    G_{\partial M}(x^*,x) &= \frac{1}{2\pi}d_h(x^*,x)^{-1} -\frac{H(x^*)}{4\pi}\log d_h(x^*,x)+\frac{X^3_{x^*}}{4\pi}\log d_h(x^*,x)\\ &+\frac{1}{16\pi}\left(\text{II}_{x^*}\left(\frac{\exp_{x^*}^{-1}(x)}{|\exp_{x^*}^{-1}(x)|_h}\right)-\text{II}_{x^*}\left(\frac{\star\exp_{x^*}^{-1}(x)}{|\exp_{x^*}^{-1}(x)|_h}\right)\right)\\
    &+\frac{1}{4\pi}h_{x^*}\left(F^{\parallel}(x^*),\frac{\exp_{x^*}^{-1}(x)}{|\exp_{x^*}^{-1}(x)|_h}\right)  +\Psi^{-3}(\partial M).
\end{align*}
We can make a further refinement on the above series by invoking Corollary 2.5 from \cite{NTT} in order to write the following
\begin{align*}
    G_{\partial M}(x^*,x) &= \frac{1}{2\pi}d_g(x^*,x)^{-1} -\frac{H(x^*)}{4\pi}\log d_h(x^*,x)+\frac{F^3(x^*)}{4\pi}\log d_h(x^*,x)\\ &+\frac{1}{16\pi}\left(\text{II}_{x^*}\left(\frac{\exp_{x^*}^{-1}(x)}{|\exp_{x^*}^{-1}(x)|_h}\right)-\text{II}_{x^*}\left(\frac{\star\exp_{x^*}^{-1}(x)}{|\exp_{x^*}^{-1}(x)|_h}\right)\right)\\
    &+\frac{1}{4\pi}h_{x^*}\left(F^{\parallel}(x^*),\frac{\exp_{x^*}^{-1}(x)}{|\exp_{x^*}^{-1}(x)|_h}\right)  +\Psi^{-3}(\partial M).
\end{align*}	
The above work yields an expression for $G_{\partial M}$ centred at the origin of the window $x^*\in \Gamma_{\varepsilon,a}$. We however need an expansion for $G_{\partial M}(x,y)$ for $x,y$ close in $\Gamma_{\varepsilon,a}$. In particular, we need an expression given by \eqref{res coord}. In order to do so, we simply invoke the following proposition, which was proven in \cite{NTT}.
	\begin{proposition}
		\label{rescaled kernel in loc coord}
		Let $x_0\in \partial M$ and $\lambda_1(x_0)$ and $\lambda_2(x_0)$ be the eigenvalues of the shape operator at $x_0$. Assume that $x=x^\varepsilon(s';x_0)$, $y=x^\varepsilon(t';x_0)$, $r = |s'-t'|$ and $t' = s+ r\omega$. Then, for $\varepsilon>0$ sufficiently small, we have that
		\begin{eqnarray*} 
			d_g(x,y)^{-1} = \varepsilon^{-1}r^{-1} + \varepsilon r^{-1} A^1(\varepsilon, s', r,\omega),
		\end{eqnarray*}
		\begin{eqnarray*} 
			d_h(x,y)^{-1} = \varepsilon^{-1}r^{-1} + \varepsilon r^{-1} A^2(\varepsilon, s', r,\omega),
		\end{eqnarray*}
		$$ 
		\II_x \left(\frac{\e_ {x;h}^{-1}y}{|\e_{x;h}^{-1} y|_h}, \frac{\e _{x;h}^{-1}y}{|\e _{x,h}^{-1} y|_h}\right) = \left( \lambda_1(x_0) \frac{(s_1 - t_1)^2}{r^2} + \lambda_2(x_0) \frac{(s_2 - t_2)^2}{r^2}\right) + \varepsilon R^1_\varepsilon (t',\omega, r),
		$$
		$$ 
		\II_x \left(*\frac{\e_ {x;h}^{-1}y}{|\e_{x;h}^{-1} y|_h}, *\frac{\e _{x;h}^{-1}y}{|\e _{x,h}^{-1} y|_h}\right) = \left( \lambda_2(x_0) \frac{(s_1 - t_1)^2}{r^2} + \lambda_1(x_0) \frac{(s_2 - t_2)^2}{r^2}\right) + \varepsilon R^2_\varepsilon (t',\omega, r),
		$$
		\begin{align*}
		h_x\left(F^{\parallel}(x), \frac{\exp_{x; h}(y)}{|\exp_{x; h}(y)|_h}\right) = \frac{F_1(t')(s_1 - t_1) + F_2(t')(s_2 - t_2)}{r} + \varepsilon R_\varepsilon^3(t',\omega,r),
		\end{align*}
		where $A^1$, $A^2$, and $A^3$ are smooth in $[0,\varepsilon_0]\times \D\times \R \times S^1$ and $R^1_\varepsilon$, $R^2_\varepsilon$, $R^3_\varepsilon$ are smooth in $\D \times S^2\times [0,\rho]$ with derivatives of all orders uniformly bounded in $\varepsilon$.
	\end{proposition}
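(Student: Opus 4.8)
The plan is to obtain all five expansions from one mechanism. In the $h$-geodesic chart $t'\mapsto x(t';x_0)$ the two points are $x=x(\varepsilon s';x_0)$ and $y=x(\varepsilon t';x_0)$, so their coordinate separation is $\varepsilon t'-\varepsilon s'=\varepsilon r\omega$. Each quantity on the left-hand side is, where this matters, a smooth function of the pair of coordinate points, and I would factor out the separation $\varepsilon r\omega$ by Hadamard's lemma, Taylor expand in $\varepsilon$, and read off the leading term together with a jointly smooth remainder. The geometric input is the normal-coordinate data at $x_0$: $h_{\alpha\beta}(x_0)=\delta_{\alpha\beta}$ and $\partial_{x^\gamma}h_{\alpha\beta}(x_0)=0$ for tangential $\gamma$ (so the Christoffel symbols of $h$ vanish at $x_0$ and are $O(\varepsilon)$ on the relevant small ball), together with the fact that, for the chosen frame $E_1,E_2$, the shape operator at $x_0$ is represented in the coordinate frame by $\operatorname{diag}(\lambda_1(x_0),\lambda_2(x_0))$.

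For the distance terms, set $\sigma_h(u',v'):=d_h(x(u'),x(v'))^2$ and $\sigma_g(u',v'):=d_g(x(u',0),x(v',0))^2$. Granting that these extend smoothly across the diagonal, Hadamard's lemma writes $\sigma_h(u',v')=\sum_{\alpha,\beta}M_{\alpha\beta}(u',v')(u-v)^\alpha(u-v)^\beta$ with $M_{\alpha\beta}$ smooth, symmetric under $(u',\alpha)\leftrightarrow(v',\beta)$, and $M_{\alpha\beta}(w',w')=h_{\alpha\beta}(w')$; hence $M_{\alpha\beta}(0,0)=\delta_{\alpha\beta}$, and the symmetry together with $(\partial_{u^\gamma}+\partial_{v^\gamma})M_{\alpha\beta}(0,0)=\partial_\gamma h_{\alpha\beta}(x_0)=0$ forces $\partial_{u^\gamma}M_{\alpha\beta}(0,0)=\partial_{v^\gamma}M_{\alpha\beta}(0,0)=0$. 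Substituting $u'=\varepsilon s'$, $v'=\varepsilon t'$ and $t'-s'=r\omega$ gives $d_h(x,y)^2=\varepsilon^2 r^2\,\Theta(\varepsilon,s',r,\omega)$ with $\Theta:=\sum_{\alpha,\beta}M_{\alpha\beta}(\varepsilon s',\varepsilon t')\omega^\alpha\omega^\beta$ smooth, $\Theta(0,\cdot)=|\omega|^2=1$ and $\partial_\varepsilon\Theta|_{\varepsilon=0}=0$; therefore $\Theta^{-1/2}=1+\varepsilon^2 A^2$ with $A^2$ smooth, and $d_h(x,y)^{-1}=\varepsilon^{-1}r^{-1}\Theta^{-1/2}=\varepsilon^{-1}r^{-1}+\varepsilon r^{-1}A^2$. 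The $d_g$ case is the same, since the quadratic jet of $\sigma_g$ along the diagonal is again the induced metric, so the corresponding $\Theta_g$ equals $1$ with vanishing $\varepsilon$-derivative at $\varepsilon=0$.

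For the remaining three identities, write $W:=\exp_{x;h}^{-1}(y)/|\exp_{x;h}^{-1}(y)|_h\in T_x\partial M$. Expressing the $x$-to-$y$ geodesic in the $x_0$-centred chart and using $\Gamma=O(\varepsilon)$, one gets $\exp_{x;h}^{-1}(y)=\varepsilon(t'-s')+O(\varepsilon^3)=\varepsilon r\omega+O(\varepsilon^3)$ in coordinates; dividing by $|\exp_{x;h}^{-1}(y)|_h=d_h(x,y)=\varepsilon r(1+O(\varepsilon^2))$ from the previous step gives $W=\omega+O(\varepsilon^2)$, smoothly in all parameters. Since $\II_x$, $\II_x(*\,\cdot)$, $h_x$ and $F^{\parallel}(x)$ depend smoothly on $x=x(\varepsilon s')$ and reduce at $\varepsilon=0$ to their values at $x_0$ --- where, with $\omega_\alpha=(t_\alpha-s_\alpha)/r$, one has $\II_{x_0}(\omega)=\lambda_1(x_0)\omega_1^2+\lambda_2(x_0)\omega_2^2$, $\II_{x_0}(*\omega)=\lambda_2(x_0)\omega_1^2+\lambda_1(x_0)\omega_2^2$ and $h_{x_0}(F^{\parallel}(x_0),\omega)=F^1(x_0)\omega_1+F^2(x_0)\omega_2$ --- substituting $W$ and expanding in $\varepsilon$ produces exactly the three displayed leading terms together with $O(\varepsilon)$ errors; dividing the smooth, $\varepsilon$-dependent difference, which vanishes at $\varepsilon=0$, by $\varepsilon$ shows the $R^i_\varepsilon$ are smooth in $(t',\omega,r)$ with all derivatives bounded uniformly in $\varepsilon$. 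The conventions in the statement --- the sign/orientation of the connecting direction, and whether the principal curvatures and force components are taken at $x_0$ or at $y=x(\varepsilon t';x_0)$, the latter changing the leading term only by an $O(\varepsilon)$ amount absorbed into $R^3_\varepsilon$ --- account for the precise form written.

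The hard part is not this bookkeeping but the regularity granted above: one must know that $(u',v')\mapsto d_g(x(u',0),x(v',0))^2$ extends smoothly across the diagonal, even though both points lie on $\partial M$. The $M$-minimizing geodesic between two nearby boundary points need not be an interior geodesic --- it can follow $\partial M$ when the boundary is concave toward $M$ --- so this is not the standard smoothness-of-distance fact; it is established in \cite{NTT}, and can also be checked directly by separating the two regimes (interior connecting geodesic versus boundary-following minimizer), each producing a smooth $d_g^2$. All statements not involving $F$ reduce verbatim to \cite{NTT}; the only genuinely new ingredient is the force-field identity, which, as sketched, is just the first-order $\varepsilon$-Taylor expansion of the smooth objects $F^{\parallel}$ and $W$.
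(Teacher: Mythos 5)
Your proposal is correct and follows essentially the same route as the paper's proof, which itself simply defers the four non-drift identities to Corollaries 2.3, 2.6, 2.9 of \cite{NTT} and handles the new force-field identity via the same rescaled boundary-normal Taylor expansion that you carry out (cited as Lemma 2.8 there). One small caveat, not affecting the proof since you correctly fall back on \cite{NTT} for this point: your parenthetical ``alternative check'' for smoothness of $(u',v')\mapsto d_g(x(u'),x(v'))^2$ across the diagonal --- ``separating the two regimes, each producing a smooth $d_g^2$'' --- is more delicate than it sounds, because piecing together two smooth branches across the interior-geodesic/boundary-following interface does not by itself give smoothness of the infimum across that interface.
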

	\begin{proof}
		The first two statements were proved in Corollaries 2.6 and 2.3 of \cite{NTT}, respectively. The next two results were proved in Corollary 2.9 of \cite{NTT}. The last one follows from Lemma 2.8.
	\end{proof}


\section{Proof of Theorems \ref{main theorem disk} and \ref{main theorem}}\label{pr main rres}
	In this section we give a proof for Theorems \ref{main theorem disk} and \ref{main theorem}. We recall that the mean sojourn time, $u_{\varepsilon,a}$, satisfies the elliptic mixed boundary value problem \eqref{main bvp}, this is proved in the Appendix. Therefore, by using Green's identity, we show that $u_{\varepsilon,a}$ time satisfies the integral equation	
	\begin{align}\label{org_int_eq_pr main rres}
		u_{\varepsilon,a}(x) = \mathcal{G}(x) + \int_{\partial M}G(x,z) \partial_{\nu_z} u_{\varepsilon,a}(z) d_h(z) + C_{\varepsilon,a},
	\end{align}
	where $x\in M^0$, $G$ is the Neumann Green function we disscused in Section \ref{Green} and
	\begin{equation*}
	C_{\varepsilon,a} := \frac{1}{|\partial M|} \int_{\partial M} u_{\varepsilon,a}(z) d_h(z), \qquad \mathcal{G}(x) := \int_M G(x,z) d_h(z).
	\end{equation*}
	From \eqref{adj}, it follows that $\mathcal{G}$ sutisfies
	\begin{equation}\label{eq for mathcal G}
	\begin{cases}
	\Delta_{g} \mathcal{G}(x) + g ( F(x), \nabla_{g}\mathcal{G}(x) ) = -1;\\
	\partial_{\nu}\mathcal{G}(x)\left.\right|_{x\in\partial M} = -\frac{\Phi(x)}{|\partial M|};\\
	\int_{\partial M} \mathcal{G}(x) d_h(x) =0.
	\end{cases}
	\end{equation}
	By taking the trace of the integral equation \eqref{org_int_eq_pr main rres} to $x\in \Gamma_{\varepsilon, a}$, we obtain
	\begin{equation*}
		0 = \mathcal{G}(x) + \int_{\partial M}G_{\partial M}(x,z) \partial_{\nu_z} u_{\varepsilon,a}(z) d_h(z) + C_{\varepsilon,a}.
	\end{equation*}
	Therefore, Proposition \ref{G exp sec 3} gives
	\begin{align}\label{int_eq_mani}
	\nonumber-\mathcal{G}(x)-C_{\varepsilon,a} &= \frac{1}{2\pi}\int_{\Gamma_{\varepsilon,a}} d_g(x,y)^{-1}\partial_\nu u_{\varepsilon,a}(y)d_h(y)\\
	\nonumber&-\frac{H(x)-\partial_{\nu}\phi(x)}{4\pi}\int_{\Gamma_{\varepsilon,a}} \log d_h(x,y) \partial_\nu u_{\varepsilon,a}(y)d_h(y)\\
	&+\frac{1}{16\pi}\int_{\Gamma_{\varepsilon,a}}\left(\text{II}_x\left(\frac{\exp_x^{-1}(y)}{|\exp_x^{-1}(y)|_h}\right)-\text{II}_x\left(\frac{\star\exp_x^{-1}(y)}{|\exp_x^{-1}(y)|_h}\right)\right)\partial_\nu u_{\varepsilon,a}(y)d_h(y)  \\
	\nonumber&+\frac{1}{4\pi}h_x\left(F^{\parallel}(x), \frac{\exp_{x; h}(y)}{|\exp_{x; h}(y)|_h}\right) \partial_\nu u_{\varepsilon,a}(y)d_h(y)\\
	\nonumber& + \int_{\Gamma_{\varepsilon,a}} R(x,y)\partial_\nu u_{\varepsilon,a}(y)d_h(y).
	\end{align}
	Since $F=\nabla_g \phi$, the fact that $u_{\varepsilon,a}$ satisfies \eqref{main bvp} implies
	\begin{equation*}
	\textrm{div}_g(e^\phi \nabla_{g} u_{\varepsilon,a}) = e^\phi (\Delta_g u_{\varepsilon,a }+F\cdot \nabla_g u_{\varepsilon,a}) = - e^\phi.
	\end{equation*}
	By the divergence form theorem, we know that 
	\begin{equation*}
	\int_{M} \textrm{div}_g(e^\phi \nabla_{g} u_{\varepsilon,a})(z) d_g(z) = \int_{\partial M} e^\phi \partial_{\nu}u_{\varepsilon,a}(z) d_h(z).
	\end{equation*}
	Thereofore, by integrating the penultimate equation, we derive the following compatibility condition
	\begin{equation}\label{compatibility_condition}
	\int_{\partial M} e^\phi \partial_{\nu}u_{\varepsilon,a}(z) d_h(z) = - \int_{M} e^{\phi(z)} d_g(z).
	\end{equation}
	
	We will use the coordinate system given by 
	\begin{eqnarray}\label{ellipse coord} \D \ni (s_1, s_2) \mapsto x^\varepsilon(s_1, as_2; x^*) \in \Gamma_{\varepsilon, a},
	\end{eqnarray}
	where $ x^\varepsilon (\cdot; x^*): {\mathcal E}_a \to \Gamma_{\varepsilon, a}$ is the coordinate defined in Section \eqref{notations}. To simplify notation we will drop the $x^*$ in the notation and denote $x^\varepsilon(\cdot; x^*)$ by simply $x^\varepsilon(\cdot)$.
	
	Note that in these coordinates the volume form for $\partial M$ is given by 
	\begin{eqnarray}
	\label{volume form in epsilon}
	d_{h} = a \varepsilon^2(1 + \varepsilon^2 Q_\varepsilon(s') )ds_1 \wedge ds_2,\ s'\in \D
	\end{eqnarray}
	for some smooth function $Q_\varepsilon (s')$ whose derivatives of all orders are bounded uniformly in $\varepsilon$. We denote 
	\begin{eqnarray}
	\label{def of psi}
	\psi_\varepsilon(s') := \partial_\nu u_\varepsilon (x^\varepsilon(s_1, as_2)).
	\end{eqnarray}
	Then, in this coordinate system, the compatibility condition become
	\begin{equation}\label{compatibility_condition_loc}
	\int_{\mathbb{D}} e^{\phi(x^{\varepsilon}(s'))} \psi_{\varepsilon}(s') (1 + \varepsilon^2 Q_\varepsilon(s') ) ds' = -\frac{1}{a\varepsilon^2} \int_{M} e^{\phi(z)} d_g(z).
	\end{equation}
	We can re-write this as follow 
	\begin{equation*}
	e^{\phi(x^*)}\int_{\D} \psi(x)dx + \int_{\D} \left[(e^{\phi(x^{\varepsilon}(s'))} - e^{\phi(x^*)}) + \varepsilon^2 e^{\phi(x^*)}Q_{\varepsilon}(s')\right]\psi_{\varepsilon}(s')ds'= -\frac{1}{a\varepsilon^2} \int_{M} e^{\phi(z)} d_g(z).
	\end{equation*}
	Since $\phi$ is smooth, we conclude that
	\begin{equation}\label{int psi}
	\int_{\D} \psi(x)dx = -\frac{\Phi(x^*)}{a\varepsilon^2} + \varepsilon \int_{\D} \tilde{Q}_{\varepsilon}(s') \psi_{\varepsilon}(s')ds',
	\end{equation}
	where $\Phi$ is the function defined in Theorem \eqref{main theorem disk} and $\tilde{Q}_\varepsilon (s')$
	is some smooth function whose derivatives of all orders are bounded uniformly in $\varepsilon$.
	
	Next, we re-write \eqref{int_eq_mani} in this coordinate system given by \eqref{ellipse coord}. To do this, let us first introduce the following operators. Consider
	\begin{equation}
	\label{La}
	L_a f = a \int_{\mathbb{D}} \frac{f(s')}{\left((t_1 - s_1)^2 + a^2(t_2 - s_2)^2\right)^{1/2}} ds'
	\end{equation}
	acting on functions of the disk $\D$. By \cite{schuss2006ellipse} we have that
	\begin{equation}
	\label{La u = 1}
	L_a  \left({K_a}^{-1} {(1 - |t'|^2)^{-1/2}}\right) = 1,
	\end{equation}
	on $\D$ where 
	\begin{equation*}
	K_a = \frac{\pi}{2} \int_{0}^{2 \pi} \frac{1}{\left( \cos^2 \theta + \frac{\sin^2 \theta}{a^2} \right)^{1/2}} d \theta. 
	\end{equation*}
	By (4.4) in \cite{NTT}, this is the unique solution in $H^{1/2}(\D)^*$ to $L_a u = 1$.
	
	Next we denote
	\begin{equation*}
	R_{\log,a} f(t') : = a \int_\D \log\left((t_1 - s_1)^2 + a^2 (t_2-s_2)^2 \right)^{1/2} f(s') ds',
	\end{equation*}
	
	\begin{equation*}
	R_{\infty,a} f(t') : = a \int_{\mathbb{D}} \frac{(t_1 - s_1)^2 - a^2 (t_2 - s_2)^2}{(t_1 - s_1)^2 + a^2 (t_2 - s_2)^2} f(s') ds',
	\end{equation*}
	
	\begin{equation*}
	R_{F,a} f(t') : = a \int_{\mathbb{D}} \frac{F^1(0)(t_1 - s_1) + a F^2(0) (t_2 - s_2)}{((t_1 - s_1)^2 + a^2 (t_2 - s_2)^2)^{1/2}} f(s') ds'.
	\end{equation*}
	\begin{remark}\label{rem}
		In \cite{NTT}, it was showed that the operators $R_{\log,a}$ and $R_{\log,a}$ are bounded maps from $H^{1/2}(\D)^*$ to $H^{3/2}(\D)$. By repeating the arguments, one can show that this is also true for $R_{F,a}$.
	\end{remark}

	We unwrap the right hand side of \eqref{int_eq_mani} term by term in the following five lemmas. Note that the first four of them are proved in \cite{NTT}. We repeat them here for the convenience of the readers.
	
	\begin{lemma}\label{l1}
		We have the following identity
		\begin{equation*}
			\int_{\Gamma_{\varepsilon,a}} d_g(x,y)^{-1}\partial_\nu u_{\varepsilon,a}(y)d_h(y) = \varepsilon L_a\psi_\varepsilon(t') + \varepsilon^3 \mathcal{A}_\varepsilon\psi_\varepsilon(t')
		\end{equation*}
		with $x=x^{\varepsilon}(t')$ for some $\A_\varepsilon : H^{1/2}(\D; ds')^* \to H^{1/2}(\D; ds')$ with operator norm bounded uniformly in $\varepsilon$.
	\end{lemma}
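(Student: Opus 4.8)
The plan is to substitute the rescaled coordinate system \eqref{ellipse coord} into the integral and extract the leading-order behaviour from Proposition \ref{rescaled kernel in loc coord}. First I would write $x = x^\varepsilon(t')$ and $y = x^\varepsilon(s')$ (using the ellipse coordinate $(s_1,s_2)\mapsto x^\varepsilon(s_1,as_2)$), so that by the first identity of Proposition \ref{rescaled kernel in loc coord} with $r = |t'-s'|$ one has
\begin{equation*}
d_g(x,y)^{-1} = \varepsilon^{-1} r^{-1} + \varepsilon r^{-1} A^1(\varepsilon, s', r, \omega),
\end{equation*}
where $A^1$ is smooth with all derivatives bounded uniformly in $\varepsilon$. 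Combined with the volume form expansion \eqref{volume form in epsilon}, $d_h = a\varepsilon^2(1+\varepsilon^2 Q_\varepsilon(s'))\, ds_1\wedge ds_2$, and the definition \eqref{def of psi} of $\psi_\varepsilon$, the integral over $\Gamma_{\varepsilon,a}$ becomes an integral over $\D$:
\begin{equation*}
\int_{\Gamma_{\varepsilon,a}} d_g(x,y)^{-1}\partial_\nu u_{\varepsilon,a}(y)\,d_h(y)
= a\varepsilon^2 \int_{\D}\left(\varepsilon^{-1}r^{-1} + \varepsilon r^{-1} A^1\right)\psi_\varepsilon(s')(1+\varepsilon^2 Q_\varepsilon(s'))\,ds'.
\end{equation*}

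Next I would expand the product $(\varepsilon^{-1}r^{-1}+\varepsilon r^{-1}A^1)(1+\varepsilon^2 Q_\varepsilon)$ and collect powers of $\varepsilon$. The $\varepsilon^{-1}$ term multiplied by $a\varepsilon^2$ gives precisely $\varepsilon\, a\int_\D r^{-1}\psi_\varepsilon(s')\,ds' = \varepsilon L_a\psi_\varepsilon(t')$ by the definition \eqref{La} of $L_a$. Every remaining contribution carries at least one extra factor of $\varepsilon^2$ relative to the leading term, i.e. is of the form $\varepsilon^3$ times an operator applied to $\psi_\varepsilon$: explicitly $\varepsilon^3 a\int_\D r^{-1}(A^1 + Q_\varepsilon + \varepsilon^2 A^1 Q_\varepsilon)\psi_\varepsilon(s')\,ds'$. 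Defining $\mathcal{A}_\varepsilon$ to be this operator (with kernel $a r^{-1}(A^1(\varepsilon,s',r,\omega)+Q_\varepsilon(s')+\varepsilon^2 A^1 Q_\varepsilon)$), one obtains the claimed identity
\begin{equation*}
\int_{\Gamma_{\varepsilon,a}} d_g(x,y)^{-1}\partial_\nu u_{\varepsilon,a}(y)\,d_h(y) = \varepsilon L_a\psi_\varepsilon(t') + \varepsilon^3 \mathcal{A}_\varepsilon\psi_\varepsilon(t').
\end{equation*}

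It remains to check that $\mathcal{A}_\varepsilon \colon H^{1/2}(\D;ds')^* \to H^{1/2}(\D;ds')$ with operator norm bounded uniformly in $\varepsilon$. The kernel of $\mathcal{A}_\varepsilon$ has the same $r^{-1}$ singularity as that of $L_a$, multiplied by functions ($A^1$, $Q_\varepsilon$) that are smooth on the relevant compact set with all derivatives bounded uniformly in $\varepsilon$; so the mapping property follows from the same weakly-singular integral operator estimate that gives the boundedness of $L_a$ (this is essentially the $a=1$, drift-free computation carried out in \cite{NTT}), with the uniformity in $\varepsilon$ coming from the uniform bounds on $A^1$ and $Q_\varepsilon$. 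The main — though still routine — obstacle is precisely this last mapping-property verification: one must confirm that multiplying the Riesz-type kernel $r^{-1}$ by a bounded smooth factor preserves the $H^{1/2*}\to H^{1/2}$ continuity with constants independent of $\varepsilon$, which I would handle by invoking the corresponding estimate from \cite{NTT} rather than reproving it here.
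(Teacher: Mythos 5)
Your proof follows essentially the same route as the paper: substitute the rescaled ellipse coordinates, apply the first identity of Proposition \ref{rescaled kernel in loc coord} and the volume expansion \eqref{volume form in epsilon}, identify the leading $\varepsilon^{-1}r^{-1}\cdot a\varepsilon^{2}$ term with $\varepsilon L_a\psi_\varepsilon$, and absorb the rest into $\varepsilon^3\mathcal{A}_\varepsilon$ whose $H^{1/2}(\D)^*\to H^{1/2}(\D)$ boundedness is delegated to the corresponding weakly-singular kernel estimate in \cite{NTT} (the paper invokes Lemma 2.11 there). One small imprecision: you declare $r=|t'-s'|$, but since the ellipse chart is $s'\mapsto x^\varepsilon(s_1,as_2)$, the quantity $r$ appearing in Proposition \ref{rescaled kernel in loc coord} is $\bigl((t_1-s_1)^2+a^2(t_2-s_2)^2\bigr)^{1/2}$, which is what you actually use when matching the leading term to the definition \eqref{La} of $L_a$; the substance of the argument is unaffected.
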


	\begin{proof}
		By using Proposition \ref{rescaled kernel in loc coord} and \eqref{volume form in epsilon}, we see that in the coordinate system \eqref{ellipse coord} it follows
		\begin{align*}
			\int_{\Gamma_{\varepsilon,a}} d_g(x,y)^{-1}\partial_\nu u_{\varepsilon,a}(y)d_h(y)= &a\varepsilon \int_{\D} \frac{1}{((s_1-t_1)^{2} + a (s_2-t_2)^{2})^{1/2}}\psi_{\varepsilon}(s')ds'\\
			&+ a\varepsilon^3 \int_{\D} \frac{A^1(s',t')(1 + \varepsilon^2 Q_\varepsilon(s') ) + Q_\varepsilon(s')}{((s_1-t_1)^{2} + a (s_2-t_2)^{2})^{1/2}}\psi_{\varepsilon}(s')ds'.\\
		\end{align*}
		Therefore, by Lemma 2.11 of \cite{NTT}, the second term of the right-hand side can be written as $\varepsilon^3\mathcal{A}_{\varepsilon}\psi$, for operator $\A_\varepsilon$ which satisfies the requirement of the statement.
	\end{proof}
	From now on, we will denote by $\A_\varepsilon$ any operator which takes $H^{1/2}(\D ;ds')^* \to H^{1/2}(\D; ds')$ whose operator norm is bounded uniformly in $\varepsilon$. 
	
	For the second term of the right-hand side of \eqref{int_eq_mani}, the following lemma holds.
	\begin{lemma}\label{l2}
		We have the following identity
		\begin{align*}
			(H(x)-\partial_{\nu}\phi(x))&\int_{\Gamma_{\varepsilon,a}} \log d_h(x,y) \partial_\nu u_{\varepsilon,a}(y)d_h(y) = -(H(x^*) - \partial_{\nu}\phi(x^*)) \Phi(x^*) \log\varepsilon\\
			&  +\varepsilon^2 (H(x^*) - \partial_{\nu}\phi(x^*)) R_{log,a} \psi_{\varepsilon}(t') + O_{ H^{1/2}(\D)}(\varepsilon\log\varepsilon) + \varepsilon^3\log\varepsilon \mathcal{A}_{\varepsilon}\psi_{\varepsilon}.
		\end{align*}
		where $x=x^{\varepsilon}(t')$ and $\Phi$ is the function defined in Theorem \eqref{main theorem disk}.
	\end{lemma}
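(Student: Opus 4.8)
The plan is to evaluate the integral $\int_{\Gamma_{\varepsilon,a}} \log d_h(x,y)\, \partial_\nu u_{\varepsilon,a}(y)\, d_h(y)$ with $x = x^\varepsilon(t')$ by pushing everything through the rescaled coordinate system \eqref{ellipse coord}, exactly as in Lemma \ref{l1}. First I would apply Proposition \ref{rescaled kernel in loc coord}, which gives $d_h(x,y)^{-1} = \varepsilon^{-1} r^{-1} + \varepsilon r^{-1} A^2(\varepsilon,s',r,\omega)$ with $r = |s'-t'|$; taking $-\log$ of the reciprocal yields
\begin{equation*}
\log d_h(x,y) = \log\varepsilon + \log r - \log\!\left(1 + \varepsilon^2 A^2 r^{-1}\cdot r\right),
\end{equation*}
so more precisely $\log d_h(x^\varepsilon(s'),x^\varepsilon(t')) = \log \varepsilon + \log\left((s_1-t_1)^2 + a^2(s_2-t_2)^2\right)^{1/2} + \varepsilon^2 B_\varepsilon(s',t')$ for a smooth remainder $B_\varepsilon$ bounded with all derivatives uniformly in $\varepsilon$ (here the factor $a$ inside the square comes from the stretched coordinate $s_2 \mapsto as_2$). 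Combining with the volume form \eqref{volume form in epsilon}, $d_h = a\varepsilon^2(1+\varepsilon^2 Q_\varepsilon(s'))\,ds'$, and the definition \eqref{def of psi} of $\psi_\varepsilon$, the integral becomes
\begin{equation*}
a\varepsilon^2 \log\varepsilon \int_\D \psi_\varepsilon(s')(1+\varepsilon^2 Q_\varepsilon)\,ds' + a\varepsilon^2 \int_\D \log\!\left((s_1-t_1)^2 + a^2(s_2-t_2)^2\right)^{1/2}\psi_\varepsilon(s')(1+\varepsilon^2 Q_\varepsilon)\,ds' + \varepsilon^4 \int_\D B_\varepsilon \psi_\varepsilon(1+\varepsilon^2 Q_\varepsilon)\,ds'.
\end{equation*}

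Next I would identify each piece. The first term: by \eqref{int psi}, $\int_\D \psi_\varepsilon\,ds' = -\Phi(x^*)/(a\varepsilon^2) + \varepsilon\int_\D \tilde Q_\varepsilon \psi_\varepsilon\,ds'$, so $a\varepsilon^2\log\varepsilon \int_\D \psi_\varepsilon(1+\varepsilon^2 Q_\varepsilon)\,ds' = -\Phi(x^*)\log\varepsilon + \varepsilon^3\log\varepsilon\,\mathcal{A}_\varepsilon\psi_\varepsilon + O_{H^{1/2}(\D)}(\varepsilon\log\varepsilon)$, where the $\varepsilon^3\log\varepsilon$ and $\varepsilon^5\log\varepsilon$ contributions are absorbed into the $\mathcal{A}_\varepsilon$ and error terms respectively (using that $\psi_\varepsilon$ stays in $H^{1/2}(\D)^*$ with the scaling bookkeeping established in \cite{NTT}). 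The second term is precisely $\varepsilon^2 R_{\log,a}\psi_\varepsilon(t')$ up to the $\varepsilon^4 R_{\log,a}(Q_\varepsilon\psi_\varepsilon)$ piece, which by Remark \ref{rem} ($R_{\log,a}: H^{1/2}(\D)^* \to H^{3/2}(\D)$ boundedly) is of the form $\varepsilon^3\log\varepsilon\,\mathcal{A}_\varepsilon\psi_\varepsilon$ after accounting for the $O(\varepsilon^{-2}\log\varepsilon)$ size of $\psi_\varepsilon$; the third term is $\varepsilon^4$ times a bounded operator applied to $\psi_\varepsilon$, hence also absorbed. Finally, multiplying through by $(H(x) - \partial_\nu \phi(x))$ and Taylor-expanding this coefficient about $x^*$, writing $H(x) - \partial_\nu\phi(x) = H(x^*) - \partial_\nu\phi(x^*) + O(\varepsilon)$ in $t'$, converts the leading $-\Phi(x^*)\log\varepsilon$ into $-(H(x^*)-\partial_\nu\phi(x^*))\Phi(x^*)\log\varepsilon$ with the $O(\varepsilon)$ correction times $\log\varepsilon$ feeding the $O_{H^{1/2}(\D)}(\varepsilon\log\varepsilon)$ error.

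The main obstacle I anticipate is the careful bookkeeping of which remainder falls into which bucket: one must keep track of the a priori size $\|\psi_\varepsilon\|_{H^{1/2}(\D)^*} = O(\varepsilon^{-2}\log\varepsilon)$ (or whatever bound is available at this stage of the argument in \cite{NTT}) so that a term like $\varepsilon^4 R_{\log,a}(Q_\varepsilon\psi_\varepsilon)$ is correctly identified as $\varepsilon^3\log\varepsilon\,\mathcal{A}_\varepsilon\psi_\varepsilon$ rather than something larger, and to confirm that the Taylor remainder of the curvature coefficient genuinely contributes only at order $\varepsilon\log\varepsilon$ in $H^{1/2}(\D)$. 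Since the geometry input (Proposition \ref{rescaled kernel in loc coord}) and the operator mapping properties (Remark \ref{rem}, Lemma 2.11 of \cite{NTT}) are all in hand, the proof is essentially a matter of organizing these estimates; the computation is routine modulo this accounting, and the statement follows by collecting terms.
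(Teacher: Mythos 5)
Your proof is correct and follows essentially the same route as the paper: apply Proposition \ref{rescaled kernel in loc coord} and \eqref{volume form in epsilon} to pass to the rescaled coordinates, split $\log d_h$ into $\log\varepsilon + \log r + O(\varepsilon^2)$, use \eqref{int psi} on the $\log\varepsilon$ piece, identify the $\log r$ piece with $R_{\log,a}$, and Taylor-expand $H - \partial_\nu\phi$ about $x^*$; your three-term split is in fact a slightly tidier organization than the paper's five-term one. One correction to your ``main obstacle'' remark: no a priori bound on $\|\psi_\varepsilon\|_{H^{1/2}(\D)^*}$ is needed or even available at this stage of the argument --- precisely because the remainders in the lemma are left in the form $\varepsilon^3\log\varepsilon\,\mathcal{A}_\varepsilon\psi_\varepsilon$ (an explicitly $\varepsilon$-uniformly bounded operator applied to $\psi_\varepsilon$) rather than as absolute errors, the bookkeeping only requires Lemma 2.11 of \cite{NTT} and Remark \ref{rem}, and terms like $\varepsilon^4 R_{\log,a}(Q_\varepsilon\psi_\varepsilon)$ are absorbed simply by writing $\varepsilon^4 = \varepsilon^3\cdot\varepsilon$ and folding the extra $\varepsilon$ into the bounded operator; the quantitative bound $\|\psi_\varepsilon\|\lesssim \varepsilon^{-2}$ is only derived later, in \eqref{psi_leading_order}, \emph{from} this lemma.
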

	\begin{proof}
		By using Proposition \eqref{rescaled kernel in loc coord} and \eqref{volume form in epsilon}, we obtain
		\begin{align*}
		&(H(x)-\partial_{\nu}\phi(x))\int_{\Gamma_{\varepsilon,a}} \log d_h(x,y) \partial_\nu u_{\varepsilon,a}(y)d_h(y)\\
		&=a \varepsilon^2 \log\varepsilon(H(x^{\varepsilon}(t')) - \partial_{\nu}\phi(x^{\varepsilon}(t')))  \int_{\D} \psi_{\varepsilon}(s') ds'\\
		&\quad + a \varepsilon^2 (H(x^{\varepsilon}(t')) - \partial_{\nu}\phi(x^{\varepsilon}(t'))) \int_{\D} \log \left[((t_1-s_1)^2 + a(t_2-s_2)^2)^{1/2}\right]\psi_{\varepsilon}(s') ds'\\
		&\quad - a \varepsilon^2 (H(x^{\varepsilon}(t')) - \partial_{\nu}\phi(x^{\varepsilon}(t'))) \int_{\D} \log \left( 1 + \varepsilon^2 A(s',t')\right)\psi_{\varepsilon}(s') ds'\\
		&\quad - a \varepsilon^4 (H(x^{\varepsilon}(t')) - \partial_{\nu}\phi(x^{\varepsilon}(t'))) \int_{\D} \log \left[\varepsilon((t_1-s_1)^2 + a(t_2-s_2)^2)^{1/2}\right] Q_{\varepsilon}(s') \psi_{\varepsilon}(s') ds'\\
		&\quad - a \varepsilon^4 (H(x^{\varepsilon}(t')) - \partial_{\nu}\phi(x^{\varepsilon}(t'))) \int_{\D} \log \left( 1 + \varepsilon^2 A(s',t')\right) Q_{\varepsilon}(s') \psi_{\varepsilon}(s') ds'.
		\end{align*}
		Apart the first and second terms, all terms of the right-hand side can be written as $\varepsilon^3\mathcal{A}_{\varepsilon}\psi_{\varepsilon}$. The first term, by \eqref{int psi}, is equals to
		\begin{align*}
			-(H(x^*) - \partial_{\nu}\phi(x^*)) \Phi(x^*)&\log\varepsilon +\varepsilon^3 \log\varepsilon (H(x^{\varepsilon}(t')) - \partial_{\nu}\phi(x^{\varepsilon}(t')))\int_{\D} \tilde{Q}_{\varepsilon}(s') \psi_{\varepsilon}(s')ds'\\
			&+ \left(H(x^*)- \partial_{\nu}\phi(x^*)-H(x^{\varepsilon}(t') + \partial_{\nu}\phi(x^{\varepsilon}(t')))\right) \log\varepsilon \Phi(x^*),
		\end{align*}
		consequently, by Lemma 2.11 of \cite{NTT}, it is equal to 
		\begin{align*}
			-(H(x^*) - \partial_{\nu}\phi(x^*)) \Phi(x^*) \log\varepsilon  + O_{ H^{1/2}(\D)}(\varepsilon\log\varepsilon) + \varepsilon^3\log\varepsilon \mathcal{A}_{\varepsilon}\psi_{\varepsilon}.
		\end{align*}
		While the second term is
		\begin{align*}
			\varepsilon^2 (H(x^*) - \partial_{\nu}\phi(x^*)) &R_{log,a} \psi_{\varepsilon}(t')\\
			&+ \varepsilon^2 (H(x^{\varepsilon}(t')) - \partial_{\nu}\phi(x^{\varepsilon}(t')) - H(x^*) + \partial_{\nu}\phi(x^*)) R_{log,a} \psi_{\varepsilon}(t').
		\end{align*}
		Since $H$ and $\phi$ are smooth functions, we derive that the last term of the above expression is $\varepsilon^3\mathcal{A}_\varepsilon\psi_{\varepsilon}$.
	\end{proof}
	For the forth term of  \eqref{int_eq_mani}, we have the following lemma.
	\begin{lemma}\label{l3}
		We have the following identity
		\begin{align*}
			\int_{\Gamma_{\varepsilon,a}}\left(\text{II}_x\left(\frac{\exp_x^{-1}(y)}{|\exp_x^{-1}(y)|_h}\right)-\text{II}_x\left(\frac{\star\exp_x^{-1}(y)}{|\exp_x^{-1}(y)|_h}\right)\right)\partial_\nu u_{\varepsilon,a}(y)d_h(y)\\
			=\varepsilon^2(\lambda_1(x^*)-\lambda_2(x^*)) R_{\infty,a} \psi_\varepsilon(t') + \varepsilon^3 \A_\varepsilon \psi_\varepsilon.
		\end{align*}
	\end{lemma}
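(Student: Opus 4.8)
The plan is to follow the template of the proofs of Lemmas \ref{l1} and \ref{l2}: pass to the rescaled coordinates \eqref{ellipse coord}, substitute the two second-fundamental-form expansions from Proposition \ref{rescaled kernel in loc coord} together with the volume-form expansion \eqref{volume form in epsilon}, isolate a single leading term of order $\varepsilon^2$, and absorb everything else into $\varepsilon^3\A_\varepsilon\psi_\varepsilon$. Unlike Lemma \ref{l2}, no term of the shape $\log\varepsilon\int_\D\psi_\varepsilon\,ds'$ arises here, so \eqref{int psi} is not needed; the integrand stays bounded near the diagonal.

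First I would record, for $x=x^\varepsilon(t')$ and $y=x^\varepsilon(s')$ in the coordinates \eqref{ellipse coord} (so the second slot carries the extra factor $a$ and $r^2$ is replaced by $(t_1-s_1)^2+a^2(t_2-s_2)^2$), the consequence of Proposition \ref{rescaled kernel in loc coord}
\begin{align*}
&\II_x\!\left(\frac{\exp_x^{-1}(y)}{|\exp_x^{-1}(y)|_h}\right)-\II_x\!\left(\frac{\star\exp_x^{-1}(y)}{|\exp_x^{-1}(y)|_h}\right)\\
&\qquad=(\lambda_1(x^*)-\lambda_2(x^*))\,\frac{(t_1-s_1)^2-a^2(t_2-s_2)^2}{(t_1-s_1)^2+a^2(t_2-s_2)^2}+\varepsilon\bigl(R^1_\varepsilon-R^2_\varepsilon\bigr),
\end{align*}
the crucial cancellation being that the $O(1)$ parts of the two expansions differ only by the interchange $\lambda_1\leftrightarrow\lambda_2$, hence combine into $\lambda_1(x^*)-\lambda_2(x^*)$ times exactly the homogeneous-of-degree-zero kernel defining $R_{\infty,a}$. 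Multiplying by $\partial_\nu u_{\varepsilon,a}(y)\,d_h(y)=a\varepsilon^2\bigl(1+\varepsilon^2 Q_\varepsilon(s')\bigr)\psi_\varepsilon(s')\,ds'$, as given by \eqref{volume form in epsilon} and \eqref{def of psi}, and integrating over $\D$, the product of the $O(1)$ kernel with the leading volume factor $a\varepsilon^2\,ds'$ is by definition $\varepsilon^2(\lambda_1(x^*)-\lambda_2(x^*))R_{\infty,a}\psi_\varepsilon(t')$; the factor $a$ in $R_{\infty,a}$ is precisely the one produced by the $a$-scaled volume form.

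It then remains to dispose of three leftover pieces: the order-$\varepsilon^3$ term coming from $\varepsilon(R^1_\varepsilon-R^2_\varepsilon)$ against $a\varepsilon^2\,ds'$; the order-$\varepsilon^4$ term coming from the $O(1)$ kernel against $a\varepsilon^2\cdot\varepsilon^2 Q_\varepsilon(s')\,ds'$; and the order-$\varepsilon^5$ cross term. Each is $\varepsilon^3$ (or higher) times an operator $\psi_\varepsilon\mapsto\int_\D(\text{kernel})\,\psi_\varepsilon\,ds'$ whose kernel is, respectively, smooth with all derivatives bounded uniformly in $\varepsilon$, the bounded degree-zero kernel of $R_{\infty,a}$ multiplied by a smooth bounded factor, or a product of the two. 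By Lemma 2.11 of \cite{NTT} (see also Remark \ref{rem}) every such operator maps $H^{1/2}(\D;ds')^*\to H^{1/2}(\D;ds')$ with operator norm bounded uniformly in $\varepsilon$, and so is of the form $\A_\varepsilon$. Summing the three pieces produces the asserted $\varepsilon^3\A_\varepsilon\psi_\varepsilon$.

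The only genuinely non-routine point — the main obstacle — is that the kernel $\bigl((t_1-s_1)^2-a^2(t_2-s_2)^2\bigr)/\bigl((t_1-s_1)^2+a^2(t_2-s_2)^2\bigr)$, and likewise the error terms $R^1_\varepsilon-R^2_\varepsilon$, is bounded but discontinuous across the diagonal $s'=t'$ (it is homogeneous of degree zero there, a Riesz-transform-type kernel), so neither the identification of the $\varepsilon^2$ term with a bona fide bounded operator $R_{\infty,a}$ on $H^{1/2}(\D)^*$ nor the uniform control of the error operators follows from a crude size bound; this is exactly what the mapping statements in \cite{NTT} (Remark \ref{rem}, Lemma 2.11) provide. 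A secondary, purely bookkeeping, caveat is that $\II_x$ in the statement is evaluated at the moving point $x=x^\varepsilon(t')$ rather than at $x^*$, but that discrepancy is already built into the $\varepsilon R^i_\varepsilon$ remainders of Proposition \ref{rescaled kernel in loc coord}, so no separate Taylor expansion of the principal curvatures $\lambda_i$ near $x^*$ is needed.
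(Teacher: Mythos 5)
Your proof is correct and follows essentially the same approach as the paper's: apply Proposition \ref{rescaled kernel in loc coord} and the volume-form expansion \eqref{volume form in epsilon} in the ellipse coordinates \eqref{ellipse coord}, exploit the $\lambda_1\leftrightarrow\lambda_2$ cancellation to identify the degree-zero kernel of $R_{\infty,a}$ as the $\varepsilon^2$ coefficient, and absorb the remaining $O(\varepsilon^3)$ terms into $\varepsilon^3\A_\varepsilon\psi_\varepsilon$ via Lemma 2.11 of \cite{NTT}. The only difference from the paper's write-up is purely cosmetic bookkeeping of the higher-order remainders.
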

		\begin{proof}
			By Proposition \ref{rescaled kernel in loc coord} and \eqref{volume form in epsilon}, we see that 
			\begin{align*}
				\int_{\Gamma_{\varepsilon,a}}&\left(\text{II}_x\left(\frac{\exp_x^{-1}(y)}{|\exp_x^{-1}(y)|_h}\right)-\text{II}_x\left(\frac{\star\exp_x^{-1}(y)}{|\exp_x^{-1}(y)|_h}\right)\right)\partial_\nu u_{\varepsilon,a}(y)d_h(y)\\
				& = a(\lambda_1(x^*) - \lambda_2(x^*))\varepsilon^2\int_{\D} \frac{(s_1 - t_1)^2 - a^2(s_2 - t_2)^2}{(s_1 - t_1)^2 + a^2(s_2 - t_2)^2} \psi_{\varepsilon}(s') ds'\\
				& \quad +a \varepsilon^3 \int_{\D} \left(R^1(t',s') - R^2(t',s')\right)(1 + \varepsilon^2 Q(s'))\psi_{\varepsilon}(s')ds'\\
				& \quad + a(\lambda_1(x^*) - \lambda_2(x^*)) \varepsilon^4 \int_{\D} \frac{(s_1 - t_1)^2 - a^2(s_2 - t_2)^2}{(s_1 - t_1)^2 + a^2(s_2 - t_2)^2} Q(s') \psi_{\varepsilon}(s') ds'.
			\end{align*}
			We use Lemma 2.11 of \cite{NTT} to complete the proof.
		\end{proof}
	Next, we stady the forth term of \eqref{int_eq_mani}.
	\begin{lemma}\label{l4}
		The folloing is true
		\begin{align*}
			\int_{\Gamma_{\varepsilon,a}} h_x\left(F^{\parallel}(x), \frac{\exp_{x; h}(y)}{|\exp_{x; h}(y)|_h}\right) \partial_\nu u_{\varepsilon,a}(y)d_h(y) = \varepsilon ^2 R_{F,a}\psi_{\varepsilon} + \varepsilon^3\mathcal{A}_\varepsilon\psi_{\varepsilon}.
		\end{align*}
	\end{lemma}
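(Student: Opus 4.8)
The plan is to prove this exactly as in Lemmas \ref{l1}--\ref{l3}: pass to the rescaled elliptic coordinate system \eqref{ellipse coord}, substitute into the integral the expansion of the kernel $h_x\!\left(F^{\parallel}(x),\exp_{x;h}(y)/|\exp_{x;h}(y)|_h\right)$ provided by Proposition \ref{rescaled kernel in loc coord} together with the volume form \eqref{volume form in epsilon}, isolate the leading term of order $\varepsilon^2$, and absorb everything else into $\varepsilon^3\A_\varepsilon\psi_\varepsilon$.

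Carrying this out, write $x=x^\varepsilon(t_1,at_2)$ and $y=x^\varepsilon(s_1,as_2)$. Proposition \ref{rescaled kernel in loc coord} then replaces the kernel by
$$\frac{F^1(s_1,as_2)(t_1-s_1)+aF^2(s_1,as_2)(t_2-s_2)}{\big((t_1-s_1)^2+a^2(t_2-s_2)^2\big)^{1/2}}+\varepsilon R^3_\varepsilon(t',s'),$$
and \eqref{volume form in epsilon} replaces $d_h(y)$ by $a\varepsilon^2(1+\varepsilon^2 Q_\varepsilon(s'))\,ds'$. It is precisely the anisotropic dilation $s_2\mapsto as_2$ that produces the factor $a$ on $F^2$ and the $a^2(t_2-s_2)^2$ in the denominator, so that, replacing $F(s_1,as_2)$ by its value $F(x^*)=(F^1(0),F^2(0))$ at $x^*$, the product of the displayed quotient with $a\varepsilon^2\psi_\varepsilon(s')\,ds'$, integrated over $\D$, is exactly $\varepsilon^2 R_{F,a}\psi_\varepsilon(t')$.

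What remains are three error contributions: the one carrying $\varepsilon R^3_\varepsilon$, the one carrying the extra factor $\varepsilon^2 Q_\varepsilon(s')$, and the one coming from the difference $F(s_1,as_2)-F(x^*)$, which is $O(\varepsilon)$ uniformly because $\phi$, hence $F=\nabla_g\phi$, is smooth. Each is an integral operator whose kernel is either smooth with all $\varepsilon$-derivatives bounded uniformly in $\varepsilon$, or $\big((t_1-s_1)^2+a^2(t_2-s_2)^2\big)^{-1/2}$ times such a factor; by Lemma 2.11 of \cite{NTT} each maps $H^{1/2}(\D)^*\to H^{1/2}(\D)$ with norm bounded uniformly in $\varepsilon$, so after pulling out the leftover $\varepsilon^3$ they are operators of the type $\A_\varepsilon$, and the lemma follows. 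There is no real obstacle here beyond keeping track of the $a$-scaling; the one ingredient not already in hand is the assertion of Remark \ref{rem} that $R_{F,a}\colon H^{1/2}(\D)^*\to H^{3/2}(\D)$ is bounded, which I would obtain by repeating verbatim the argument of \cite{NTT} for $R_{\log,a}$ and $R_{\infty,a}$, since the kernel of $R_{F,a}$ is, up to smooth factors, the standard first-order Riesz-type kernel $(t_i-s_i)\big((t_1-s_1)^2+a^2(t_2-s_2)^2\big)^{-1/2}$ on the disk.
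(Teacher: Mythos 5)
Your proposal is correct and follows essentially the same route as the paper: apply Proposition \ref{rescaled kernel in loc coord} and the volume form \eqref{volume form in epsilon} in the rescaled elliptic coordinates \eqref{ellipse coord}, read off the $\varepsilon^2$ leading term as $\varepsilon^2 R_{F,a}\psi_\varepsilon$, and push the remaining contributions into $\varepsilon^3\A_\varepsilon\psi_\varepsilon$ via Lemma 2.11 of \cite{NTT}. In fact you are slightly more careful than the paper's own write-up in two respects: you explicitly note the replacement of the smoothly varying $F$ by its value $F(x^*)=(F^1(0),F^2(0))$, with the $O(\varepsilon)$ discrepancy absorbed into the error operator (the paper's displayed formula still carries $F_i(t')$ in the leading term and does not spell out this step, though it is the same device used in Lemma \ref{l2}), and you track the anisotropic dilation $s_2\mapsto a s_2$ correctly so that the factor $a$ appears on the $F^2$ component, matching the stated definition of $R_{F,a}$ (the paper's intermediate display is missing that factor, evidently a typo). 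Your appeal to Remark \ref{rem} for the $H^{1/2}(\D)^*\to H^{3/2}(\D)$ boundedness of $R_{F,a}$ is exactly what the paper relies on as well.
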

	\begin{proof}
		By Proposition \ref{rescaled kernel in loc coord} and \eqref{volume form in epsilon}, we get
		\begin{align*}
		\int_{\Gamma_{\varepsilon,a}} h_x&\left(F^{\parallel}(x), \frac{\exp_{x; h}(y)}{|\exp_{x; h}(y)|_h}\right) \partial_\nu u_{\varepsilon,a}(y)d_h(y) \\
		&= a \varepsilon^2 \int_{\D} \frac{F_1(t')(s_1 - t_1) + F_2(t')(s_2 - t_2)}{((s_1 - t_1)^2 + a^2(s_2 - t_2)^2)^{1/2}} \psi_{\varepsilon}(s') ds'\\
		& \quad +a \varepsilon^3 \int_{\D} R^3(t',s')(1 + \varepsilon^2 Q(s'))\psi_{\varepsilon}(s')ds'\\
		& \quad + a\varepsilon^4 \int_{\D} \frac{F_1(t')(s_1 - t_1) + F_2(t')(s_2 - t_2)}{((s_1 - t_1)^2 + a^2(s_2 - t_2)^2)^{1/2}} Q(s') \psi_{\varepsilon}(s') ds'
		\end{align*}
		Finally, Lemma 2.11 of \cite{NTT} implies that the last two terms are $\varepsilon^3\mathcal{A}_\varepsilon\psi_{\varepsilon}$.
	\end{proof}
	Finally, let us look to the last term of \eqref{int_eq_mani}. By Lemma 5.1 in \cite{NTT}, we know that
	for operator $T_\varepsilon : C_c^\infty(\D) \to {\mathcal D}'(\D)$ defined by the integral kernel 
	$$R(x^\varepsilon (t'; x^*), x^\varepsilon( s'; x^*)) - R(x^*, x^*),$$
	we have $\| T_\varepsilon\|_{ H^{1/2}(\D) ^*\to H^{1/2}(\D)} =O(\varepsilon\log\varepsilon$). Therefore, by using Lemmas \ref{l1}-\ref{l4}, we re-write \eqref{int_eq_mani} in the following way
	\begin{align*}
		-\mathcal{G}(x^*) - C_{\varepsilon,a} =& \frac{\varepsilon}{2\pi}L_a\psi_{\varepsilon} + \frac{1}{4\pi}(H(x^*) - \partial_{\nu}\phi(x^*)) \Phi(x^*)\log\varepsilon\\
		& - \frac{\varepsilon^2}{4\pi}\left((H(x^*) - \partial_{\nu}\phi(x^*)) R_{\log,a} - \frac{\lambda_1(x^*) - \lambda_2(x^*)}{4\pi}R_{\infty,a} + R_{F,a}\right) \psi_{\varepsilon}\\
		& - R(x^*,x^*)\Phi(x^*) + \varepsilon^2 a T_{\varepsilon}\psi_{\varepsilon} + O_{ H^{1/2}(\D)}(\varepsilon\log\varepsilon) + \varepsilon^3\log\varepsilon \mathcal{A}_{\varepsilon}\psi_{\varepsilon}.
	\end{align*}
	This is equivalent to
	\begin{align*}
		\frac{2\pi}{\varepsilon}\left( R(x^*,x^*)\Phi(x^*) -\mathcal{G}(x^*) - C_{\varepsilon,a} - \frac{(H(x^*) - \partial_{\nu}\phi(x^*)) \Phi(x^*)\log\varepsilon}{4\pi}\right) = (L_a - \varepsilon \mathcal{R})\psi_{\varepsilon}\\
		+ \varepsilon a T_{\varepsilon}\psi_{\varepsilon} + O_{ H^{1/2}(\D)}(\log\varepsilon) + \varepsilon^2\log\varepsilon \mathcal{A}_{\varepsilon}\psi_{\varepsilon},
	\end{align*}
	where
	\begin{equation*}
	\mathcal{R}:=  \frac{H(x^*)- \partial_{\nu}\phi(x^*)}{2} R_{\log,a} - \frac{\lambda_1 - \lambda_2}{8} R_{\infty,a} + \frac{1}{2}R_{F,a}.
	\end{equation*}
	Applying $L_a^{-1}$ to both sides, we obtain
	\begin{align}\label{eq}
	\nonumber \frac{2\pi}{\varepsilon}\left( R(x^*,x^*)\Phi(x^*) -\mathcal{G}(x^*) - C_{\varepsilon,a} - \frac{(H(x^*) - \partial_{\nu}\phi(x^*)) \Phi(x^*)\log\varepsilon}{4\pi}\right)L_a^{-1}1\\
	= (I - \varepsilon L_a^{-1} \mathcal{R} + \varepsilon L_a^{-1} T_\varepsilon')\psi_{\varepsilon} + O_{ H^{1/2}(\D)^*}(\log\varepsilon),
	\end{align}
	for some $T'_\varepsilon :  H^{1/2}(\D)^* \to  H^{1/2}(\D)^*$ with operator norm $O(\varepsilon\log\varepsilon)$. As we mentioned in Remark \ref{rem}, $R_{\log,a}$, $R_{\infty,a}$, and $R_{F,a}$ are bounded maps from $H^{1/2}(\D)^*$ to $H^{3/2}(\D)$. Therefore, the right side can be inverted by Neumann series to deduce 
	\begin{align}\label{psi}
	\psi_{\varepsilon} = -\frac{2\pi C_{\varepsilon,a}}{\varepsilon}L^{-1}1 + C_{\varepsilon,a} O_{H^{1/2}(\mathbb{D})^*}(1)+O_{H^{1/2}(\mathbb{D})^*}(\varepsilon^{-1}\log\varepsilon).
	\end{align}
	Let us integrate this over $\mathbb{D}$ and use \eqref{int psi}, then we derive
	\begin{align*}
		-\frac{2\pi C_{\varepsilon,a}}{\varepsilon} \int_{\D} L_a^{-1}1 (s')ds' + C_{\varepsilon,a} O(1) + O(\varepsilon^{-1}\log\varepsilon) = -\frac{\Phi(x^*)}{a\varepsilon^2}.
	\end{align*}
	Note that
	\begin{equation}\label{int L inv}
		\int_{\D} L_a^{-1}1 (s')ds' = \frac{1}{K_a} \int_{\D} \frac{1}{(1 - |s'|^2)^{1/2}}ds' = \frac{2\pi}{K_a},
	\end{equation}
	and hence, the previous equation gives
	\begin{equation}\label{C leading order}
		C_{\varepsilon,a} = \frac{K_a\Phi(x^*)}{4\pi^2 a \varepsilon} + C_{\varepsilon,a}',
	\end{equation}
	with $C_{\varepsilon,a}'=O(\log\varepsilon)$. We put this into \eqref{psi}, to obtain
	\begin{equation}\label{psi_leading_order}
	\psi_{\varepsilon} = -\frac{K_a\Phi(x^*)}{2\pi a\varepsilon^2} L^{-1}_{a}1 + \psi_{\varepsilon}',
	\end{equation}
	where $ \|\psi_\varepsilon'\|_{H^{1/2}(\D;ds')^*} = O(\varepsilon^{-1}\log\varepsilon)$. Let us insert this into \eqref{int psi}, then we obtain
	\begin{equation}\label{int for psi'}
		\int_{\D}\psi_{\varepsilon}'(s')ds' = -\frac{K_a\Phi(x^*)}{2\pi a\varepsilon} \int_{\D}\tilde{Q}(s')L_a^{-1}1(s')ds' + \varepsilon\int_{\D}\tilde{Q}(s')\psi_\varepsilon'(s')ds'
	\end{equation}
	where
	\begin{equation*}
		\tilde{Q}(s') = \frac{1}{\varepsilon}\left((e^{\phi(x^{\varepsilon}(s'))} - e^{\phi(x^*)}) + \varepsilon^2 e^{\phi(x^*)}Q_{\varepsilon}(s')\right)
	\end{equation*}
	and $Q$ is the function involved into volume form \eqref{volume form in epsilon}.
	If we use Taylor expansion to $e^{\phi}$ at $x=x^*$, we obtain
	\begin{align*}
	e^\varphi = e^{\varphi(x^*)}+ \varepsilon g_{x^*}(e^{\varphi(x^*)}\left.\nabla_g \varphi(x)\right\vert_{x=x^*},t_1E_1+t_2E_2)+O(\varepsilon^2).
	\end{align*}
	Therefore
	\begin{align*}
	\int_{\mathbb{D}} e^{\phi(x^{\varepsilon}(t))} [L^{-1}_{a}1](t)dt = &e^{\phi(x^*)}\int_{\mathbb{D}}
	[L^{-1}_{a}](t)dt\\ 
	& + \varepsilon g_{x^*}\left(e^{\varphi(x^*)}\left.\nabla_g \varphi(x)\right\vert_{x=x^*},E_1(x^*)	\right) \int_{\mathbb{D}}t_1 [L^{-1}_{a}1](t)dt\\
	& + \varepsilon g_{x^*}\left(e^{\varphi(x^*)}\left.\nabla_g \varphi(x)\right\vert_{x=x^*},E_2(x^*)	\right) \int_{\mathbb{D}}t_2 [L^{-1}_{a}1](t)dt\\
	& + O(\varepsilon^2).
	\end{align*}
	Noting that 
	\begin{equation*}
	\int_{\mathbb{D}}t_1 [L^{-1}_{a}](t) dt = \int_{\mathbb{D}}t_2 [L^{-1}_{a}1](t)dt = 0,
	\end{equation*}
	we obtain
	\begin{equation}\label{extr_phi_star}
	\int_{\mathbb{D}} e^{\phi(x^{\varepsilon}(t))} [L^{-1}_{a}1](t)dt = e^{\phi(x^*)}\int_{\mathbb{D}}
	[L^{-1}_{a}1](t)dt + O(\varepsilon^2).
	\end{equation}
	Therefore, from \eqref{int for psi'} it follows that 
	\begin{equation}\label{int for psi' last}
		\int_{\D}\psi_{\varepsilon}'(s')ds' = O(1) + O(\log\varepsilon) = O(\log\varepsilon).
	\end{equation}
	Next, we put \eqref{C leading order} and \eqref{psi_leading_order} into \eqref{eq} to obtain
	\begin{align*}
	\frac{2\pi}{\varepsilon}\Bigg( R(x^*,x^*)\Phi(x^*) -\mathcal{G}(x^*) - &C_{\varepsilon,a}' - \frac{(H(x^*) - \partial_{\nu}\phi(x^*)) \Phi(x^*)\log\varepsilon}{4\pi}\Bigg)L_a^{-1}1\\
	&= \frac{K_a\Phi(x^*)}{2\pi a\varepsilon} L^{-1}_{a}\mathcal{R} L_a^{-1}1 - \frac{K_a\Phi(x^*)}{2\pi a\varepsilon} L^{-1}_{a}T_\varepsilon' L_a^{-1}1\\
	& \quad + \psi' - \varepsilon\left(L^{-1}_{a}\mathcal{R} - L^{-1}_{a}T_\varepsilon' \right)\psi' + O_{ H^{1/2}(\D)^*}(\log\varepsilon).
	\end{align*}
	Therefore, recalling that 
	\begin{equation*}
		\|T_\varepsilon'\|_{H^{1/2}(\D)^* \to  H^{1/2}(\D)^*} = O(\varepsilon\log\varepsilon), \quad \|\mathcal{R}\|_{H^{1/2}(\D)^* \to  H^{3/2}(\D)^*} =O(1),
	\end{equation*}
	\begin{equation*}
		\|\psi_\varepsilon'\|_{H^{1/2}(\D;ds')^*} = O(\varepsilon^{-1}\log\varepsilon),
	\end{equation*}
	we derive
	\begin{align*}
		\frac{2\pi}{\varepsilon}\left( R(x^*,x^*)\Phi(x^*) -\mathcal{G}(x^*) - C_{\varepsilon,a} - \frac{(H(x^*) - \partial_{\nu}\phi(x^*)) \Phi(x^*)\log\varepsilon}{4\pi}\right)L_a^{-1}1\\
		= \frac{K_a\Phi(x^*)}{2\pi a\varepsilon} L^{-1}_{a}\mathcal{R} L_a^{-1}1 + \psi_{\varepsilon}' + O_{ H^{1/2}(\D)^*}(\log\varepsilon).
	\end{align*}
	Let us integrate this over $\mathbb{D}$ and take into account \eqref{int for psi' last}, \eqref{int L inv}, then
	\begin{align*}
		C'_{\varepsilon,a} = & - \frac{(H(x^*) - \partial_{\nu}\phi(x^*)) \Phi(x^*)}{4\pi} \log\varepsilon\\
		& + R(x^*, x^*) \Phi(x^*) - \mathcal{G}(x^*) - \frac{K_a^2\Phi(x^*)}{8 \pi^3 a} \int_{\mathbb{D}} L_a^{-1} \mathcal{R}L^{-1}_{a}1(t)dt + O(\varepsilon\log\varepsilon).
	\end{align*}
	Since $L_a^{-1}$ is self-adjoint, we can express the last integral more explicitly:
	\begin{eqnarray*}
		\int_{\D} L_a^{-1} \mathcal{R} L_a^{-1} 1(s') ds' = K_a^{-2} \langle (1-|s'|^2)^{-1/2}, \mathcal{R}(1-|s'|^2)^{-1/2}\rangle.
	\end{eqnarray*}
	Moreover, 
	\begin{equation*}
		\int_{\D} \frac{s_1}{(1 - |s'|^2)^{1/2}} \int_{\D} \frac{1}{((s_1 - t_1)^2 + a^2(s_2 - t_2)^2)^{1/2}}\frac{1}{(1 - |t'|^2)^{1/2}}dt'ds' = 0.
	\end{equation*}
	Indeed, consider the following two changes of variables for the left-hand side
	\begin{eqnarray*}
		(s_1,s_2,t_1,t_2)=(r \cos\alpha,r \sin\alpha, \rho \cos\beta, \rho \sin\beta),\\
		(s_1,s_2,t_1,t_2)=(-r \cos\alpha,r \sin\alpha, -\rho \cos\beta, \rho \sin\beta).
	\end{eqnarray*}
	The results differ by multiplying by $-1$, which means that the left-hand side is 0. Therefore, we know that
	\begin{equation*}
		\int_{\D} L_a^{-1} R_{F,a} L_a^{-1} 1(s') ds' = 0.
	\end{equation*}
	Finally, recalling the definition of $\mathcal{R}$ and the relation between $C_{\varepsilon,a}'$ and $C_{\varepsilon,a}$, we obtain
	\begin{align*}
		C_{\varepsilon,a} = & \frac{K_a\Phi(x^*)}{4\pi^2 a \varepsilon} - \frac{(H(x^*) - \partial_{\nu}\phi(x^*)) \Phi(x^*)}{4\pi} \log\varepsilon\\
		& + R(x^*, x^*) \Phi(x^*) - \mathcal{G}(x^*)\\
		& - \frac{(H(x^*) - \partial_{\nu}\phi(x^*))\Phi(x^*)}{16 \pi^3}\int_{\mathbb{D}} \frac{1}{ (1-|s'|^2)^{1/2}} \int_\D  \frac{\log\left((t_1 - s_1)^2 + a^2 (t_2-s_2)^2 \right)^{1/2}}{ (1-|t'|^2)^{1/2}} dt' ds'\\
		&+\frac{(\lambda_1(x^*) - \lambda_2(x^*))\Phi(x^*)}{64\pi^3}\int_{\mathbb{D}} \frac{1}{ (1-|s'|^2)^{1/2}} \int_\D \frac{(t_1 - s_1)^2 - a^2 (t_2 - s_2)^2}{(t_1 - s_1)^2 + a^2 (t_2 - s_2)^2} \frac{1}{ (1-|t'|^2)^{1/2}} dt' ds'\\
		& + O(\varepsilon\log\varepsilon).
	\end{align*}
	
	In case of the disc, that is $a = 1$, the last two terms explicitly calculated in Lemmas 4.5 and 4.6 in \cite{NTT}. Therefore, we have
	\begin{align}
		C_{\varepsilon} : = C_{\varepsilon,1} = \frac{\Phi(x^*)}{4 a \varepsilon} - \frac{(H(x^*) - \partial_{\nu}\phi(x^*)) \Phi(x^*)}{4\pi} \log\varepsilon + R(x^*, x^*) \Phi(x^*) - \mathcal{G}(x^*)\\
		- \frac{(H(x^*) - \partial_{\nu}\phi(x^*)) \Phi(x^*)}{4\pi} \left( 2\log 2 - \frac{3}{2}\right) + O(\varepsilon\log\varepsilon).
	\end{align}
	Next, let us recall that 
	$$ u_{\varepsilon,a}(x) = \mathbb{E}[\tau_{\Gamma_{\epsilon, a}} | X_0 = x] = \mathcal{G}(x) + C_{\epsilon,a} -\Phi(x^*) G(x^*,x) + r_\epsilon (x)$$
	for each $x\in M\backslash \Gamma_{\epsilon,a}$. Here the remainder $r_\epsilon$ is given by
	\begin{eqnarray}
	\label{r eps remainder}
	r_\epsilon(x) = \int_{\partial M} ( G(x,y) - G(x,x^*))\partial_{\nu} u_\epsilon (y) d_h(y).
	\end{eqnarray}
	Let $K\subset \overline M$ be a compact subset of $\overline M$ which has positive distance from $x^*$ and consider $x\in K$. Writing out this integral in the $x^\epsilon(\cdot; x^*)$ coordinate system and using \eqref{def of psi}, \eqref{volume form in epsilon}, and the expression of $\psi_\epsilon$ derived in \eqref{psi_leading_order}, we get
	\begin{eqnarray}
	r_\epsilon(x) &=& \epsilon  \int_{\D}  \frac{-|M|}{2\pi (1- |s'|^2)^{1/2}} L(x,\epsilon s') (1+\epsilon^2 Q_\epsilon(s')) ds'\\\nonumber
	&+& a\epsilon^3 \int_{\D}\psi'_\epsilon (s') L(x,\epsilon s') (1+ \epsilon^2 Q_\epsilon(s')) ds'
	\end{eqnarray}
	for some function $L(x,s')$ jointly smooth in $(x,s') \in K\times \D$. The second integral formally denotes the duality between $H^{1/2}(\D)^*$ and $H^{1/2}(\D)$. The estimate for $\psi'_\epsilon$ derived in \eqref{psi_leading_order} now gives for any integer $k$ and any compact set $K$ not containing $x^*$, $\| r_\epsilon\|_{C^k(K)}\leq C_{k,K}\epsilon$. This gives us the first parts of Theorems \ref{main theorem disk} and \ref{main theorem}.
	
	Finally, we compute the average expected value over $M$. Let us writte
	\begin{equation*}
		v(x) = \int_{\Gamma_{\epsilon, a}} G (x,y) \partial_{\nu} u_{\varepsilon,a}(y)d_h(y).
	\end{equation*}
	Then
	\begin{equation*}
		v(x) = u_{\varepsilon,a}(x) -\mathcal{G}(x) - C_{\varepsilon,a},
	\end{equation*}
	so that
	\begin{equation*}
	\begin{cases}
		\Delta_{g}v(x) + g(F(x), \nabla_g v(x)) = 0;\\
		v(x)\left.\right|_{\partial M} = \int_{\Gamma_{\epsilon, a}} G_{\partial M} (x,y) \partial_{\nu} u_{\varepsilon,a}(y)d_h(y).
	\end{cases}
	\end{equation*}
	Sicne $G_{\partial M}\in \Psi_{cl}^{-1}(\partial M)$ we know that $v(x)\left.\right|_{\partial M} \in H^{1/2}(\partial M)$. 
	
	Let $\{f_j\}_{j=1}^{\infty}$ be a sequence of smooth functions such that $f_j\to \partial_{\nu} u_{\varepsilon,a}$ in $H^{-1/2}(\partial M)$. Let $\{v_j\}_{j=1}^{\infty}$ be functions which sutisfy 
	\begin{equation*}
	\begin{cases}
	\Delta_{g}v_j(x) + g(F(x), \nabla_g v_j(x)) = 0;\\
	v_j(x)\left.\right|_{\partial M} = \int_{\Gamma_{\epsilon, a}} G_{\partial M} (x,y) f_j(y)d_h(y).
	\end{cases}
	\end{equation*}
	Then $v_j\to v$ in $H^1(M)$. Therefore, we compute 
	\begin{align*}
		\int_M \int_{\Gamma_{\epsilon, a}} G(x,y) \partial_\nu u_\epsilon(y) d_h(y)d_g(x) &= \lim_{j\to \infty}  \int_M \int_{\partial M} G(x,y) f_j(y) d_h(y)d_g(x)\\
		& = \lim_{j\to \infty} \int_{\partial M}  f_j(y) \int_{M} G(x,y) d_g(x) d_h(y)\\
		& = \int_{\Gamma_{\epsilon, a}} \partial_{\nu}u_{\varepsilon,a}(y) \int_{M} G(x,y) d_g(x) d_h(y).
	\end{align*}
	Recalling \eqref{sym for G}, we derive
	\begin{align*}
		\int_{\Gamma_{\epsilon, a}} \partial_{\nu}u_{\varepsilon,a}(y) \int_{M} G(x,y) d_g(x) d_h(y) &= \int_{\Gamma_{\epsilon, a}} \partial_{\nu}u_{\varepsilon,a}(y) \int_{M} e^{\phi(y) - \phi(x)}G(y,x) d_g(x) d_h(y)\\
		& = -\int_{M} e^{\phi(z)} d_g(z) \int_{M} e^{ - \phi(x)}G(x^*,x) d_g(x) + O(\varepsilon)\\
		& = - \Phi(x^*) \int_{M} e^{ \phi(x^*)- \phi(x)}G(x^*,x) d_g(x) + O(\varepsilon)\\
		& = - \Phi(x^*) \int_{M} G(x, x^*) d_g(x) + O(\varepsilon)
	\end{align*}
	Therefore, \eqref{org_int_eq_pr main rres} implies 
	\begin{align*}
	\int_{M} u_{\varepsilon,a}(x) = \int_{M} \mathcal{G}(x) - \Phi(x^*) \int_{M} G(x, x^*) d_g(x) + |M|C_{\varepsilon,a}.
	\end{align*}
	This gives us the second parts of Theorems \ref{main theorem disk} and \ref{main theorem}.

\section{Appendix A -Elliptic Equation for $\mathbb{E}[\tau_{\Gamma} | X_0 = x]$}
	\label{appendix A}
	\renewcommand{\theequation}{A.\arabic{equation}}
	\setcounter{equation}{0}
	In this appendix we show that $u(x) := \mathbb{E}[\tau_{\Gamma} | X_0 = x]$ satisfies the boundary value problem \eqref{main bvp}. This is standard material but we could not find a suitable reference which precisely addresses our setting. As such we are including this appendix for the convenience of the reader.
	
	Let $(M,g,\partial M)$ be an orientable compact connected Riemannian manifold with non-empty smooth boundary oriented by $d_g$. Let us consider the operator
	\begin{equation*}
	u \rightarrow \Delta_gu + g(F,\nabla_gu),
	\end{equation*}
	where $F$ is a force field, which is given by $F=\nabla_g\phi$ for a smooth up to the boundary potential $\phi$. We can re-write this operator in the following way
	\begin{equation*}
	\Delta_{g}^F\cdot := \Delta_g\cdot + g(F,\nabla_g\cdot) = \frac{1}{e^\phi}\mathrm{div}_g(e^\phi \nabla_{g} \cdot).
	\end{equation*}
	Note that $e^\phi$ is a smooth positive function on $M$. Moreover, there exist constants such that
	\begin{equation}\label{e phi bound}
		0<c_0<e^{\phi(x)}<c_1<\infty, \quad x\in M.
	\end{equation}
	According to  \cite{GrigoryanSaloff-Coste}, the operator $\Delta_{g}^F$ is called by weighted Laplace operator and the pair $(M,\mu)$, where $\mu(x) := e^{\phi(x)}d_g(x)$, is called a weighted manifold.
	Note that the operator $\Delta_{g}^{F}$ with initial domain $C_0^\infty(M)$ is essentially self-adjoint in $L^2(M,\mu)$ and non-positive definite. 
	
	Let $(X_t, \mathbb{P}_x)$ be the Brownian motion on $M$ starting at $x$, generated by the weighted Laplace operator $\Delta_{g}^{F}$. Let $\Gamma$ be a geodesic ball on $\partial M$ with radius $\varepsilon > 0$. We denote by $\tau_{\Gamma}$ the first time the Brownian motion $X_t$ hits $\Gamma$, that is
	\begin{equation*}
	\tau_{\Gamma} := \inf \{ t\geq 0: X_t \in \Gamma\}.
	\end{equation*}
	We set
	\begin{equation*}
	\mathcal{P}_{\Gamma}(t,x) := \mathbb{P}[\tau_{\Gamma} \leq t| X_0 = x].
	\end{equation*}
	Let us note that $\mathcal{P}_{\Gamma}(t,x)$ is the probability that the Brownian motion hits $\Gamma$ before or at time $t$, and therefore, satisfies
	\begin{equation}\label{initial_condition}
	\mathcal{P}_{\Gamma}(0,x) = 0, \quad x\in M\setminus \Gamma,
	\end{equation}
	\begin{equation}\label{on_window}
	\mathcal{P}_{\Gamma}(t,x) = 1, \quad (t,x)\in  [0,\infty) \times \Gamma.
	\end{equation}
	
	Note that, for any compact subset $\Gamma\subset M$, it follows
	\begin{equation*}
	\text{Cap}(\Gamma, M) := \inf_{u\in C^{\infty}(\overline{M}), \left.u\right|_{\Gamma} = 1} \int_{M} |\nabla_g u(x)|^2 e^{\phi(x)}d_g(x) =0.
	\end{equation*}
	Note that in \cite{GrigoryanSaloff-Coste} and \cite{ilmavirta}, the authors consider the manifold together with its boundary, and $C_c^{\infty}(M)$, $C_0^{\infty}(M)$ denote the set of smooth (up to the boundary) functions with compact support. In case of compact manifold, these sets coincide with $C^{\infty}(\overline{M})$. This implies that that $(M,\mu)$ is parabolic, that is, the probability that the Brownian motion ever hits any compact set $K$ with non-empty interior is $1$. Since $\Gamma \subset \partial M$ is connected with non-empty interior on $\partial M$, we can extend $M$ to a compact connected Riemannian manifold $\tilde{M}$ such that $\overline{\tilde{M} \setminus M}$ is compact with non-empty interior and $\overline{\tilde{M} \setminus M} \cap M = \Gamma$. Note that, the Brownian motion, starting at any point $M \setminus \Gamma$, hits $\overline{\tilde{M} \setminus M}$ if and only if it hits $\Gamma$. Therefore, the parabolicity condition of $(M, \mu)$ gives 
	\begin{equation}\label{lim_inf}
	\lim_{t \rightarrow \infty} \mathcal{P}_{\Gamma}(t,x) = 1, \quad x \in M.
	\end{equation}
	
	Further, let us define the mean first arrival time $u$, as
	\begin{equation}
	\label{u as expected val}
	u(x) := \mathbb{E}[\tau_{\Gamma} | X_0 = x] := \int_{0}^{\infty} t d \mathcal{P}_{\Gamma}(t,x),
	\end{equation}
	where the integral is a Riemann-Stieltjes integral. To investigate $u$, let us recall some properties of $\mathcal{P}_{\Gamma}$. By Remmark 2.1 in \cite{GrigoryanSaloff-Coste}, it follows that
	\begin{equation*}
	1 - \mathcal{P}_{\Gamma}(t,x) = \left(e^{t\Delta_{mix}^{F}}1\right)(x),
	\end{equation*}
	where $e^{t\Delta_{mix}^{F}}$ is the semigroup with infinitesimal generator $\Delta_{mix}^{F}$, and $\Delta_{mix}^{F}$ is the weighted Laplace operator $\Delta_{g}^F$ corresponding to the Dirichlet boundary condition on $\Gamma$ and Neumann boundary condition on $\partial M \setminus \Gamma$, which is defined as follows
	\begin{align}
	\label{dom of deltamix}
	&\mathrm{D}(\Delta_{mix}^{F}):=\{u\in H^1(M): \; \Delta_g^{F}u \in L^2(M,\mu) \;\left.u\right|_{\Gamma} = 0, \; \left.\partial_{\nu} u\right|_{\Gamma^c} = 0\}\\
	&\Delta_{mix}^F u = \Delta_g^F u \quad u \in \mathrm{D}(\Delta_{mix}^F).
	\end{align}
	Since $e^\phi$ is smooth and satisfies \eqref{e phi bound}, we conclude that, as a set, $L^2(M)=L^2(M, \mu)$. Moreover, for $u\in H^1(M)$, the conditions $\Delta_g^{F}u \in L^2(M,\mu)$ and $\Delta_g u \in L^2(M)$ are equivalent. Therefore, $\mathrm{D}(\Delta_{mix}^{F}) = \mathrm{D}(\Delta_{mix})$, where
	$\Delta_{mix}$ is the classical Laplace operator $\Delta_{g}$ corresponding to the Dirichlet boundary condition on $\Gamma$ and Neumann boundary condition on $\partial M \setminus \Gamma$:
	\begin{align}
	\label{dom of cls deltamix}
	&\mathrm{D}(\Delta_{mix}):=\{u\in H^1(M): \; \Delta_gu \in L^2(M) \;\left.u\right|_{\Gamma} = 0, \; \left.\partial_{\nu} u\right|_{\Gamma^c} = 0\}\\
	&\Delta_{mix} u = \Delta_g u \quad u \in \mathrm{D}(\Delta_{mix}).
	\end{align}
	
	In \eqref{dom of deltamix} and \eqref{dom of cls deltamix}, we define $\partial_\nu u \in H^{-1/2}(\partial M)$ using the same method for defining the Dirichlet to Neumann map. That is, for $u\in H^1(M)$ such that $\Delta_g u \in L^2(M)$, the distribution $\partial_{\nu} u\left. \right|_{\partial M} \in H^{-1/2}(\partial M)$ acts on $f\in H^{1/2}(\partial M)$ via
	\begin{eqnarray}
	\label{def of boundary normal derivative}	
	\langle \partial_{\nu}u\left. \right|_{\partial M} , f\rangle := \int_{M} \Delta_g u(z) \overline{v_f(z)}  d_g(z) + \int_{M} g(\nabla_gu(z),\nabla_g\overline{v_f(z)})  d_g(z), 
	\end{eqnarray}
	where $v_f\in H^1(M)$ is the harmonic extension of $f$.	We say that $\partial_{\nu} u\left. \right|_{\overline{\omega}} = 0$, for non-empty open set $\omega\subset \partial M$, if $\langle \partial_{\nu}u\left. \right|_{\partial M} , f\left. \right|_{\partial M}\rangle = 0$ for all $f \in H^{1/2}(\partial M)$ such that $f|_{\partial M\setminus \overline{\omega}} = 0$.
	Note that if $u$ sufficiently regular, for instance $u\in H^2(M)$, then $\langle\partial_{\nu}u\left. \right|_{\partial M} , f\rangle$ is equal to the boundary integral of $\partial_{\nu}u\left. \right|_{\partial M} $ and $ f$.
	
	Next, we note that 
	\begin{equation*}
		(\Delta_{mix}^{F}u,u)_{L^2(M,\mu)} =  - \int_{M} e^{\phi(z)} g(\nabla_g u(z), \nabla_g u(z)) d_g(z),
	\end{equation*}
	and therefore, \eqref{e phi bound} implies that 
	\begin{equation*}
		c_1(\Delta_{mix}^{F}u,u)_{L^2(M)} < (\Delta_{mix}^{F}u,u)_{L^2(M,\mu)} < c_0(\Delta_{mix}^{F}u,u)_{L^2(M)} 
	\end{equation*}
	for $u\in \mathrm{D}(\Delta_{mix})$, recall that $\mathrm{D}(\Delta_{mix}^{F}) = \mathrm{D}(\Delta_{mix})$. Note that $\Delta_{mix}$ is a self-adjoint operator with discrete spectrum, consisting of negative eigenvalues accumulating at $-\infty$; see for instance Proposition 7.1 in \cite{NTT}. Therefore, the above inequality implies that the spectrum of $\Delta_{mix}^{F}$ consists eigenvalues with finite multiplicity accumulating at $-\infty$. Hence, $\Delta_{mix}^{F}$ satisfies the quadratic estimate
	\begin{equation*}
	\int_0^{\infty} \| t\Delta_{mix}^{F}(1 + t^2(\Delta_{mix}^{F})^2)^{-1} u \|^2_{L^2(M,\mu)}\frac{dt}{t} \leq C\|u\|_{L^2(M,\mu)}^2,
	\end{equation*}
	for some $C>0$ and all $u\in L^2(M,\mu)$; see for instance \cite[p. 221]{McIntosh}. Therefore, $\Delta_{mix}$ admits the functional calculus defined in \cite{Nursultanov}. 
	\begin{remark}
		The functional calculus in \cite{Nursultanov} is defined for a concrete operator, which is denoted by $T$ in the notation used in that article. However, $\Delta_{mix}^{F}$ satisfy all necessary conditions to admit this functional calculus.
	\end{remark}
	Therefore, the semigroup $e^{t\Delta_{mix}^{F}}$, which is contracting by Hille-Yosida theorem \cite[Theorem 8.2.3]{Jost}, can be defined as follows
	\begin{equation*}
	e^{t\Delta_{mix}^{F}} u = \frac{1}{2\pi i}\int_{\gamma_{a,\alpha}} e^{t\zeta}(\zeta - \Delta_{mix}^{F})^{-1} u d\zeta, \qquad u\in L^2(M),
	\end{equation*}
	where $a\in (\tau , 0)$, $\alpha \in (0, \frac{\pi}{2})$, and $\gamma_{a,\alpha}$ is the anti-clockwise oriented curve:
	\begin{equation*}
	\gamma_{a,\alpha}:= \{ \zeta \in \mathbb{C}: \textrm{Re}\zeta \leq a, \text{ } |\textrm{Im}\zeta| = |\textrm{Re}\zeta - a|\tan\alpha\}.
	\end{equation*}
	Let $\varepsilon>0$ such that $a + \varepsilon < 0$. Then $\Delta_{mix}^{F} + \varepsilon$ is also a negative self-adjoint operator, and hence generates contracting semigroup, $e^{t(\Delta_{mix}^{F} + \varepsilon)}$, as above.
	
	By definition, we obtain, for $u \in L^2(M,\mu)$,
	\begin{align}\label{Delta_eps}
	e^{t\Delta_{mix}^{F}} u &= \frac{1}{2\pi i}\int_{\gamma_{a,\alpha}} e^{t\zeta}(\zeta - \Delta_{mix}^{F})^{-1} u d\zeta \\
	&\nonumber = \frac{e^{-t\varepsilon}}{2\pi i} \int_{\gamma_{a,\alpha}} e^{t(\zeta + \varepsilon)}(\zeta + \varepsilon - (\Delta_{mix}^{F} + \varepsilon))^{-1} u d\zeta\\
	&\nonumber = \frac{e^{-t\varepsilon}}{2\pi i} \int_{\gamma_{a + \varepsilon,\alpha}} e^{t\xi}(\xi - (\Delta_{mix}^{F} + \varepsilon))^{-1} u d\xi =  e^{-t\varepsilon} e^{t(\Delta_{mix}^{F} + \varepsilon)}u,
	\end{align}
	where $\gamma_{a + \varepsilon,\alpha} = \gamma_{a,\alpha} + \varepsilon \subset \{\textrm{Re}\xi < 0\}$.
	Let $f_1$ the constant function on $M$ equals $1$. By Theorem 8.2.2 in \cite{Jost}, we know, for $\lambda > 0$,
	\begin{equation*}
	(\lambda - (\Delta_{mix}^{F} + \varepsilon))^{-1}f_1 = \int_{0}^{\infty} e^{-\lambda t} e^{t(\Delta_{mix}^{F} + \varepsilon)}f_1 dt.
	\end{equation*}
	Let us choose $\lambda = \varepsilon$, then, by using \eqref{Delta_eps}, we obtain
	\begin{equation*}
	-(\Delta_{mix}^{F})^{-1} f_1 = \int_{0}^{\infty} e^{-\varepsilon t} e^{t(\Delta_{mix}^{F} + \varepsilon)}f_1 dt = \int_{0}^{\infty} e^{t\Delta_{mix}^{F}} f_1 dt
	\end{equation*}
	and hence, 
	\begin{equation}\label{laplcae_P}
	\int_{0}^{\infty} 1 - \mathcal{P}_{\Gamma}(t,x) dt = -((\Delta_{mix}^{F})^{-1} f_1)(x) < \infty.
	\end{equation}
	Therefore, the dominated convergence theorem implies  
	\begin{equation*}
	\lim_{b \rightarrow \infty} \int_{0}^{b} \left(\mathcal{P}_{\Gamma}(b,x) - \mathcal{P}_{\Gamma}(t,x) \right)dt = \int_{0}^{\infty} 1 - \mathcal{P}_{\Gamma}(t,x) dt <\infty.
	\end{equation*}
	Hence, by using \eqref{u as expected val} and integration by parts, we obtain
	\begin{align*}
	u(x) &= \lim_{b \rightarrow \infty} \left( \mathcal{P}_{\Gamma}(b,x) b -\int_{0}^{b} \mathcal{P}_{\Gamma}(t,x) dt\right) = \lim_{b \rightarrow \infty} \int_{0}^{b} \left(\mathcal{P}_{\Gamma}(b,x) - \mathcal{P}_{\Gamma}(t,x) \right)dt < \infty\\
	& = \int_{0}^{\infty} 1 - \mathcal{P}_{\Gamma}(t,x) dt.
	\end{align*}
	Therefore, by \eqref{laplcae_P}, we obtain
	\begin{equation*}
	\Delta_{mix}^{F} u = -f_1=-1.
	\end{equation*}
	In particular, $u\in \mathrm{D}(\Delta_{mix})$, and hence,
	\begin{equation*}
	u \mid_{\Gamma} = 0, \qquad \partial_\nu u \mid_{\partial M\backslash \Gamma} = 0.
	\end{equation*}
	We see that \eqref{main bvp} is satisfied.

\bibliographystyle{plain}
\bibliography{references}

\setlength{\parskip}{0pt}





\end{document}